\newtheorem{theo}{Theorem}[section]
\newtheorem{lemm}[theo]{Lemma}
\newtheorem{defi}[theo]{Definition}
\newtheorem{prop}[theo]{Proposition}
\newtheorem{rema}[theo]{Remark}
\numberwithin{equation}{section}
\begin{document}

\title{On the Cauchy problem for the Hunter-Saxton equation on the line}
\author{
    Weikui $\mbox{Ye}^1$ \footnote{email: 904817751@qq.com} \quad and\quad
    Zhaoyang $\mbox{Yin}^{1,2}$ \footnote{email: mcsyzy@mail.sysu.edu.cn}\\
    $^1\mbox{Department}$ of Mathematics, Sun Yat-sen University,\\
    Guangzhou, 510275, China\\
    $^2\mbox{Faculty}$ of Information Technology,\\
    Macau University of Science and Technology, Macau, China
}
\date{}
\maketitle
\begin{abstract}
   In this paper, we consider the Cauchy problem for the Hunter-Saxton (HS) equation on the line. Firstly, we establish the local well-posedness for the integral form of the (HS) equation by constructing some special spaces $E^s_{p,r}$, which mix Lebesgue spaces and homogeneous Besov spaces. Then we present a global existence result and provide a sufficient condition for strong solutions to blow up in finite time for the equation. Finally, we give the ill-posedness and the unique continuation of the Hunter-Saxton equation.
\end{abstract}
\noindent \textit{Keywords}: Hunter-Saxton equation, Local well-posedness, Global existence, Blow up, Ill-posedness, Unique continuation.\\
Mathematics Subject Classification: 35G25, 35A01, 35L03, 35L05, 35L60

\tableofcontents

\section{Introduction}
\par
In this paper, we consider the following Hunter-Saxton $(HS)$ equation:
\begin{align}\label{hseq}
(u_t+uu_x)_x=\frac12u_x^2.
\end{align}
 The equation (\ref{hseq}) was derived by Hunter and Saxton as an asymptotic model of liquid crystals \cite{hsd,Beals}. The $HS$ equation is completely integrable \cite{Beals,Zheng1} and has a bi-Hamiltonian structure \cite{Olver}. Local well-posedness and blow-up phenomena for the Cauchy problems of the (HS) equation on the circle were studied in \cite{hsd,yzhs}. Global weak solutions of the (HS) equation was investigated in \cite{b-c1,jmpa}.

  In particular, the (HS) equation is the limit form of the well-known Camassa-Holm (CH) equation:
$$(1-\partial_x^2)u_t=3uu_x-2u_x u_{xx}-u u_{xxx}.$$
Local well-posedness and ill-posedness for the Cauchy problem of the (CH) equation were investigated in \cite{c-e2,G-L-Y,Danch}.
Blow-up phenomena and global existence of strong solutions were discussed in \cite{C-E,c2,c-e2,c-e3}. The existence of global weak solutions and dissipative solutions were also investigated in \cite{X-Z,B-C-Z1,B-C-Z2}. Moreover, the existence and uniqueness of the global conservative solutions on the line were studied in \cite{C-M,uniqueness1}. While the existence and uniqueness of the global conservative solutions on the circle were studied in \cite{hr,uniqueness2}.

However, the Cauchy problem of the Hunter-Saxton on the line has not been studied yet. In this paper we consider the Cauchy problem for the Hunter-Saxton (HS) on the line:
\begin{equation}\label{eq0}
  \left\{\begin{array}{l}
    u_{xt}+(uu_x)_x=\frac{1}{2}u_x^2,  \\
    u(0,x)=u_0(x), \quad x\in\mathbb{R}.
  \end{array}\right.
\end{equation}
Taking $\int_{\infty}^{x} dz$ to the Hunter-Saxton equation (\ref{eq0}) and letting $g(t)$ is a bounded function, we get
\begin{equation}\label{eq1}
  \left\{\begin{array}{l}
    u_{t}+uu_x=\int_{-\infty}^{x}\frac{1}{2}u_x^2(z)dz+g(t),  \\
    u(0,x)=u_0(x), \quad x\in\mathbb{R}.
  \end{array}\right.
\end{equation}

In fact, the mainly difficulty is that the term $\int_{-\infty}^{x}\frac{1}{2}u_x^2(z)dz$ is not bounded in the inhomogeneous Besov spaces $B^s_{p,r}$, even the Lebesgue spaces $L^p$ $(1\leq p<\infty)$. Since $u^2_x\geq 0$ and $\int_{-\infty}^{x}\frac{1}{2}u_x^2(z)dz$ is monotonic increasing, one can't consider \eqref{eq1} in $B^s_{p,r}$ on the line. To overcome this difficulty, we would like to study \eqref{eq1} in some new spaces $E^s_{p,r}$ which mix Lebesgue spaces and homogeneous Besov spaces. In this way, one will get a bounded iterative sequence in $E^s_{p,r}$ and finally obtain a solution for the Cauchy problem of \eqref{eq1}.

The remaining part of the paper is organized as follows. In Section 2, we introduce some useful preliminaries. In Section 3, we prove the local well-posedness of (\ref{eq1}) in some special space $E^s_{p,r}$, the main approach is based on the Littlewood-Paley theory and transport equations theory. In Section 4, we obtain a global existence result and give a sufficient condition for strong solutions to blow up in finite time. In Section 5, we give the ill-posedness of (\ref{eq1}) in the special Besov space $L^{\infty}\cap \dot{B}^1_{p,r}\cap \dot{B}^{1+\frac{d}{p}}_{p,r},r>1$. In Section 6, we give the unique continuation of (\ref{eq1}) when $g(t):=C\int_{-\infty}^{x}u_x^2dz$.

\section{Preliminaries}
\par
In this section, we will recall some propositons on the Littlewood-Paley decomposition and Besov spaces.
\begin{prop}\cite{book}
Let $\mathcal{C}$ be the annulus $\{\xi\in\mathbb{R}^d:\frac 3 4\leq|\xi|\leq\frac 8 3\}$. There exist radial functions $\chi$ and $\varphi$, valued in the interval $[0,1]$, belonging respectively to $\mathcal{D}(B(0,\frac 4 3))$ and $\mathcal{D}(\mathcal{C})$, and such that
$$ \forall\xi\in\mathbb{R}^d,\ \chi(\xi)+\sum_{j\geq 0}\varphi(2^{-j}\xi)=1, $$
$$ \forall\xi\in\mathbb{R}^d\backslash\{0\},\ \sum_{j\in\mathbb{Z}}\varphi(2^{-j}\xi)=1, $$
$$ |j-j'|\geq 2\Rightarrow\mathrm{Supp}\ \varphi(2^{-j}\cdot)\cap \mathrm{Supp}\ \varphi(2^{-j'}\cdot)=\emptyset, $$
$$ j\geq 1\Rightarrow\mathrm{Supp}\ \chi(\cdot)\cap \mathrm{Supp}\ \varphi(2^{-j}\cdot)=\emptyset. $$
The set $\widetilde{\mathcal{C}}=B(0,\frac 2 3)+\mathcal{C}$ is an annulus, and we have
$$ |j-j'|\geq 5\Rightarrow 2^{j}\mathcal{C}\cap 2^{j'}\widetilde{\mathcal{C}}=\emptyset. $$
Further, we have
$$ \forall\xi\in\mathbb{R}^d,\ \frac 1 2\leq\chi^2(\xi)+\sum_{j\geq 0}\varphi^2(2^{-j}\xi)\leq 1, $$
$$ \forall\xi\in\mathbb{R}^d\backslash\{0\},\ \frac 1 2\leq\sum_{j\in\mathbb{Z}}\varphi^2(2^{-j}\xi)\leq 1. $$
\end{prop}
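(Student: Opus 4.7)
The plan is to construct both $\chi$ and $\varphi$ from a single smooth radial cutoff. First I would fix a radial function $\theta \in \mathcal{D}(B(0,\tfrac{4}{3}))$ with values in $[0,1]$ that equals $1$ on $\overline{B(0,\tfrac{3}{4})}$; such a $\theta$ exists by a standard convolution of an indicator function with a mollifier. I would then set $\chi := \theta$ and $\varphi(\xi) := \theta(\xi/2) - \theta(\xi)$. Since $\theta(\xi/2) = 1$ when $|\xi| \le 3/2$ and vanishes when $|\xi| > 8/3$, while $\theta(\xi) = 1$ when $|\xi| \le 3/4$, the difference $\varphi$ is supported in the annulus $\mathcal{C} = \{3/4 \le |\xi| \le 8/3\}$ and takes values in $[0,1]$.

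The partition-of-unity identities then follow by telescoping. For any integer $J \ge 0$,
\[
\chi(\xi) + \sum_{j=0}^{J}\varphi(2^{-j}\xi) = \theta(2^{-(J+1)}\xi),
\]
and letting $J \to \infty$ gives the inhomogeneous identity; the locally finite sum from $-J$ to $J$ tends to $1$ on $\mathbb{R}^d\setminus\{0\}$ as $J \to \infty$, giving the homogeneous identity. The support disjointness properties are then purely geometric: since $\mathrm{Supp}\,\varphi(2^{-j}\cdot) \subset \{3\cdot 2^{j-2} \le |\xi| \le 8 \cdot 2^{j}/3\}$, a direct comparison of annulus radii shows emptiness of intersection as soon as $|j-j'|\ge 2$, and the same comparison after enlarging $\mathcal{C}$ by $B(0,2/3)$ yields the $|j-j'|\ge 5$ condition for $\widetilde{\mathcal{C}}$. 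The property $\mathrm{Supp}\,\chi \cap \mathrm{Supp}\,\varphi(2^{-j}\cdot) = \emptyset$ for $j \ge 1$ follows from $4/3 < 3/2 = 3\cdot 2^{-1}\cdot 2^{j-1}|_{j=1}$.

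For the lower bounds on $\chi^2 + \sum_{j\ge 0}\varphi^2(2^{-j}\cdot)$, I would exploit that, by the just-established support conditions, at each fixed $\xi$ at most two of the functions $\chi(\xi), \varphi(2^{-j}\xi)$ are nonzero. Writing this as a sum of at most two nonnegative terms and combining Cauchy--Schwarz with the partition-of-unity identity gives
\[
1 = \Bigl(\chi(\xi) + \sum_{j\ge 0}\varphi(2^{-j}\xi)\Bigr)^{2} \le 2\Bigl(\chi^{2}(\xi) + \sum_{j\ge 0}\varphi^{2}(2^{-j}\xi)\Bigr),
\]
producing the lower bound $1/2$; the upper bound $\le 1$ is immediate since each summand lies in $[0,1]$ and is dominated by itself in the linear partition. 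The analogous argument on $\mathbb{R}^{d}\setminus\{0\}$ yields the homogeneous two-sided inequality.

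The main obstacle, and the only part where some care is needed, is the bookkeeping for the $|j-j'|\ge 5$ separation involving the Minkowski sum $\widetilde{\mathcal{C}} = B(0,2/3) + \mathcal{C}$; one must track how the enlargement shifts the inner and outer radii of the dyadic annulus $2^{j'}\widetilde{\mathcal{C}}$ and then check that $2^{j}\cdot 3/4 > 2^{j'}\cdot(8/3 + 2/3)$ precisely when $j - j' \ge 5$. Everything else reduces to a telescoping identity and elementary manipulations of radii.
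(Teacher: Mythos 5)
The paper offers no proof of this proposition: it is quoted verbatim from Bahouri--Chemin--Danchin \cite{book}, so there is no in-paper argument to compare against. Your construction ($\chi:=\theta$, $\varphi(\xi):=\theta(\xi/2)-\theta(\xi)$ for a radial cutoff $\theta$ equal to $1$ on $\overline{B(0,\tfrac34)}$ and supported in $B(0,\tfrac43)$, followed by telescoping, radius comparisons, and the Cauchy--Schwarz bound $1=(a+b)^2\le 2(a^2+b^2)$ using that at most two terms are nonzero at any point) is exactly the standard proof in that reference, and it is sound. One piece of bookkeeping is off, precisely in the step you flagged as delicate: for $2^{j}\mathcal{C}\cap 2^{j'}\widetilde{\mathcal{C}}=\emptyset$ the binding case is $j'>j$, not $j>j'$. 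Since $\widetilde{\mathcal{C}}=B(0,\tfrac23)+\mathcal{C}\subset\{\tfrac1{12}<|\xi|<\tfrac{10}{3}\}$, one needs the inner radius $2^{j'}\cdot\tfrac1{12}$ of $2^{j'}\widetilde{\mathcal{C}}$ to clear the outer radius $2^{j}\cdot\tfrac83$ of $2^{j}\mathcal{C}$, i.e.\ $2^{j'-j}\ge 32$, which is where $j'-j\ge 5$ comes from; the inequality you wrote, $2^{j}\cdot\tfrac34>2^{j'}\cdot\tfrac{10}{3}$, is the non-binding direction and already holds for $j-j'\ge 3$. With that correction the argument is complete.
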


Denote $\mathcal{F}$ by the Fourier transform and $\mathcal{F}^{-1}$ by its inverse.
Let $u$ be a tempered distribution in $\mathcal{S}'_h(\mathbb{R}^d)$. For all $j\in\mathbb{Z}$, define
$$
\dot{\Delta}_j u=\mathcal{F}^{-1}(\varphi(2^{-j}\cdot)\mathcal{F}u)\,\ \quad \dot{S}_ju=\sum_{j'<j}\Delta_{j'}u.
$$
Then the Littlewood-Paley decomposition is given as follows:
$$ u=\sum_{j\in\mathbb{Z}}\Delta_j u \quad \text{in}\ \mathcal{S}'_h(\mathbb{R}^d). $$

Let $s\in\mathbb{R},\ 1\leq p,r\leq\infty.$ The homogeneous Besov space $\dot{B}^s_{p,r}(\mathbb{R}^d)$ is defined by
$$ \dot{B}^s_{p,r}=\dot{B}^s_{p,r}(\mathbb{R}^d)=\{u\in S'_h(\mathbb{R}^d):\|u\|_{\dot{B}^s_{p,r}(\mathbb{R}^d)}=\Big\|(2^{js}\|\dot{\Delta}_j u\|_{L^p})_j \Big\|_{l^r(\mathbb{Z})}<\infty\}. $$

Let $u$ be a tempered distribution in $\mathcal{S}'(\mathbb{R}^d)$. For all $j\in\mathbb{Z}$, define
$$
\Delta_j u=0\,\ \text{if}\,\ j\leq -2,\quad
\Delta_{-1} u=\mathcal{F}^{-1}(\chi\mathcal{F}u),\quad
\Delta_j u=\mathcal{F}^{-1}(\varphi(2^{-j}\cdot)\mathcal{F}u)\,\ \text{if}\,\ j\geq 0,\quad
S_j u=\sum_{j'<j}\Delta_{j'}u.
$$
Then the Littlewood-Paley decomposition is given as follows:
$$ u=\sum_{j\in\mathbb{Z}}\Delta_j u \quad \text{in}\ \mathcal{S}'(\mathbb{R}^d). $$

Let $s\in\mathbb{R},\ 1\leq p,r\leq\infty.$ The nonhomogeneous Besov space $B^s_{p,r}(\mathbb{R}^d)$ is defined by
$$ B^s_{p,r}=B^s_{p,r}(\mathbb{R}^d)=\{u\in S'(\mathbb{R}^d):\|u\|_{B^s_{p,r}(\mathbb{R}^d)}=\Big\|(2^{js}\|\Delta_j u\|_{L^p})_j \Big\|_{l^r(\mathbb{Z})}<\infty\}. $$

There are some properties about Besov spaces. For the homogeneous Besov space, we have
\begin{prop}\label{prop0}\cite{book,he}
Let $s\in\mathbb{R},\ 1\leq p,p_1,p_2,r,r_1,r_2\leq\infty$ with $s<\frac{d}{p} or s=\frac{d}{p}, r=1$.  \\
(1) $\dot{B}^s_{p,r}$ is a Banach space, and is continuously embedded in $\mathcal{S}'_h$. \\
(2) If $r<\infty$, then $\lim\limits_{j\rightarrow\infty}\|S_j u-u\|_{\dot{B}^s_{p,r}}=0$.\\
(3) If $p_1\leq p_2$ and $r_1\leq r_2$, then $\dot{B}^s_{p_1,r_1}\hookrightarrow \dot{B}^{s-d(\frac 1 {p_1}-\frac 1 {p_2})}_{p_2,r_2}. $\\
(4) If $p\in [1,2]$, then $\dot{B}^s_{p,p}\hookrightarrow L^{p}$. If $p\in [2,\infty)$, then $\dot{B}^s_{p,2}\hookrightarrow L^{p}$. If $p=\infty$, then $\dot{B}^s_{\infty,1}\hookrightarrow L^{\infty}\quad .$\\
(5) Fatou property: if $(u_n)_{n\in\mathbb{N}}$ is a bounded sequence in $\dot{B}^s_{p,r}$, then an element $u\in \dot{B}^s_{p,r}$ and a subsequence $(u_{n_k})_{k\in\mathbb{N}}$ exist such that
$$ \lim_{k\rightarrow\infty}u_{n_k}=u\ \text{in}\ \mathcal{S}'_h\quad \text{and}\quad \|u\|_{\dot{B}^s_{p,r}}\leq C\liminf_{k\rightarrow\infty}\|u_{n_k}\|_{\dot{B}^s_{p,r}}. $$
(6) Let $m\in\mathbb{R}$ and $f$ be a $S^m$-mutiplier, (i.e. f is a smooth function and satisfies that $\forall\alpha\in\mathbb{N}^d$, $\exists C=C(\alpha)$, such that $|\partial^{\alpha}f(\xi)|\leq C|\xi|^{m-|\alpha|},\ \forall\xi\in\mathbb{R}^d)$. Then the operator $f(D)=\mathcal{F}^{-1}(f\mathcal{F})$ is continuous from $B^s_{p,r}$ to $B^{s-m}_{p,r}$.\\
(7) If $s>0$, then $L^p\cap\dot{B}^s_{p,r}=B^s_{p,r}$
\end{prop}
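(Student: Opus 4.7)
Since all seven items are classical properties of homogeneous Besov spaces, the plan is essentially to record how each reduces to a standard Fourier-analytic ingredient. For (1), (2), and (5) I would use that each $\dot{\Delta}_j$ is convolution with a Schwartz function: completeness of $\dot{B}^s_{p,r}$ then follows from completeness of $L^p$ and $\ell^r(\mathbb{Z})$, once one checks that a Cauchy sequence admits a limit in $\mathcal{S}'_h$; the latter uses the hypothesis $s<\frac{d}{p}$ (or $s=\frac{d}{p}$, $r=1$) to make $\sum_{j<0}\dot{\Delta}_j u$ converge against any Schwartz test function via a geometric-series bound on the low frequencies. Statement (2) is then immediate by dominated convergence on $\ell^r$ when $r<\infty$. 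The Fatou property (5) would be obtained by extracting a weakly convergent subsequence in each $L^p$ shell and applying a diagonal extraction in the dyadic index $j$, then testing the limit against Schwartz functions to identify it in $\mathcal{S}'_h$.

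For (3) and (4) the workhorse is Bernstein's inequality: if $\mathcal{F}u$ is supported in a ball or annulus of size $2^j$, then $\|u\|_{L^{p_2}}\lesssim 2^{jd(1/p_1-1/p_2)}\|u\|_{L^{p_1}}$ whenever $p_1\leq p_2$. Applied block by block this yields (3), combined with the trivial embedding $\ell^{r_1}\hookrightarrow\ell^{r_2}$. For (4), I would invoke the Littlewood-Paley square-function characterization of $L^p$ together with Minkowski's inequality in the favorable direction (exchanging the $\ell^r$ and $L^p$ norms) to get $\dot{B}^0_{p,p}\hookrightarrow L^p$ for $p\in[1,2]$ and $\dot{B}^0_{p,2}\hookrightarrow L^p$ for $p\in[2,\infty)$; the $p=\infty$ case reduces to the triangle inequality after summing blocks, which converges precisely because $r=1$.

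Finally, (6) reduces, after localization by $\varphi(2^{-j}\cdot)$, to estimating the $L^1$ norm of $\mathcal{F}^{-1}(f(\xi)\varphi(2^{-j}\xi))$: a rescaling to frequencies of size one, combined with the hypothesis $|\partial^{\alpha}f(\xi)|\leq C|\xi|^{m-|\alpha|}$ and repeated integration by parts, shows this $L^1$ norm behaves like $2^{jm}$, which produces the shift $s\mapsto s-m$ after Young's inequality. For (7), the inclusion $B^s_{p,r}\supset L^p\cap\dot{B}^s_{p,r}$ is straightforward from the definitions, while the converse, under $s>0$, amounts to absorbing the low-frequency blocks $\sum_{j<0}\dot{\Delta}_j u$ into $L^p$ using the geometric gain $2^{js}$ and controlling the ball-supported piece $\Delta_{-1}u=\mathcal{F}^{-1}(\chi\mathcal{F}u)$ by Young's inequality from $u\in L^p$. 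The main subtlety throughout lies in (1) at the critical index $s=\frac{d}{p}$, $r=1$, where convergence of the low-frequency sum into $\mathcal{S}'_h$ must be argued rather carefully; however, since the entire statement is recorded verbatim in \cite{book,he}, no genuine obstacle arises and a citation suffices for our purposes in the later sections.
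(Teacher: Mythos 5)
The paper gives no proof of this proposition at all---it is a cited preliminary from \cite{book,he}---so there is no in-paper argument to compare against; your outline is a correct account of the standard proofs in those references (Bernstein estimates applied block by block for the embeddings, the Littlewood--Paley square-function characterization for the $L^p$ embeddings, weak-$*$ compactness with diagonal extraction for the Fatou property, the $L^1$ kernel bound $\|\mathcal{F}^{-1}(f\varphi(2^{-j}\cdot))\|_{L^1}\lesssim 2^{jm}$ for the multiplier, and the low-frequency summation under $s<\frac{d}{p}$ or $s=\frac{d}{p},\ r=1$ for completeness in $\mathcal{S}'_h$). The only point worth flagging is that items (4) and (6) as printed contain typos (the $L^p$ embeddings hold at regularity $s=0$, and the multiplier should map $\dot{B}^s_{p,r}$ to $\dot{B}^{s-m}_{p,r}$, not the inhomogeneous spaces), which your sketch silently and correctly normalizes.
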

\quad\\
For the nonhomogeneous Besov space, we have
\begin{prop}\cite{book,he}
Let $s\in\mathbb{R},\ 1\leq p,p_1,p_2,r,r_1,r_2\leq\infty.$  \\
(1) $B^s_{p,r}$ is a Banach space, and is continuously embedded in $\mathcal{S}'$. \\
(2) If $r<\infty$, then $\lim\limits_{j\rightarrow\infty}\|S_j u-u\|_{B^s_{p,r}}=0$. If $p,r<\infty$, then $C_0^{\infty}$ is dense in $B^s_{p,r}$. \\
(3) If $p_1\leq p_2$ and $r_1\leq r_2$, then $ B^s_{p_1,r_1}\hookrightarrow B^{s-d(\frac 1 {p_1}-\frac 1 {p_2})}_{p_2,r_2}. $
If $s_1<s_2$, then the embedding $B^{s_2}_{p,r_2}\hookrightarrow B^{s_1}_{p,r_1}$ is locally compact. \\
(4) If $p\in [1,2]$, then $B^s_{p,p}\hookrightarrow L^{p}$. If $p\in [2,\infty)$, then $B^s_{p,2}\hookrightarrow L^{p}$. If $p=\infty$, then $B^s_{\infty,1}\hookrightarrow L^{\infty}\quad .$\\
(5) Fatou property: if $(u_n)_{n\in\mathbb{N}}$ is a bounded sequence in $B^s_{p,r}$, then an element $u\in B^s_{p,r}$ and a subsequence $(u_{n_k})_{k\in\mathbb{N}}$ exist such that
$$ \lim_{k\rightarrow\infty}u_{n_k}=u\ \text{in}\ \mathcal{S}'\quad \text{and}\quad \|u\|_{B^s_{p,r}}\leq C\liminf_{k\rightarrow\infty}\|u_{n_k}\|_{B^s_{p,r}}. $$
(6) Let $m\in\mathbb{R}$ and $f$ be a $S^m$-mutiplier, (i.e. f is a smooth function and satisfies that $\forall\alpha\in\mathbb{N}^d$, $\exists C=C(\alpha)$, such that $|\partial^{\alpha}f(\xi)|\leq C(1+|\xi|)^{m-|\alpha|},\ \forall\xi\in\mathbb{R}^d)$.
Then the operator $f(D)=\mathcal{F}^{-1}(f\mathcal{F})$ is continuous from $B^s_{p,r}$ to $B^{s-m}_{p,r}$.
\end{prop}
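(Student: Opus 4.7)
The plan is to verify the six items in turn, with Proposition 2.1 and Bernstein's inequality doing essentially all of the work. For (1), I would identify a Cauchy sequence $(u_n)$ in $B^s_{p,r}$ with the corresponding Cauchy sequence $\bigl(2^{js}\|\Delta_j u_n\|_{L^p}\bigr)_{j\geq -1}$ in the space $\ell^r$, and extract limiting blocks $v_j:=\lim_n \Delta_j u_n\in L^p$ that are frequency localised in $2^j\mathcal{C}$. Bernstein's inequality $\|v_j\|_{L^\infty}\lesssim 2^{jd/p}\|v_j\|_{L^p}$ then makes the series $\sum_j v_j$ converge in $\mathcal{S}'$ to a tempered distribution $u$ whose dyadic blocks are exactly the $v_j$, yielding simultaneously completeness and the continuous embedding $B^s_{p,r}\hookrightarrow\mathcal{S}'$. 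Statement (2) is then immediate: when $r<\infty$, the tail of the $\ell^r$-sum defining $\|u-S_j u\|_{B^s_{p,r}}$ vanishes as $j\to\infty$, and density of $C_0^{\infty}$ when $p,r<\infty$ follows by mollifying and cutting off the smooth approximant $S_j u$.

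Items (3) and (4) reduce to Bernstein's inequality $\|\Delta_j u\|_{L^{p_2}}\lesssim 2^{jd(1/p_1-1/p_2)}\|\Delta_j u\|_{L^{p_1}}$ combined with the trivial nesting $\ell^{r_1}\hookrightarrow\ell^{r_2}$ for $r_1\leq r_2$. The local compactness assertion in (3) follows by separating low and high frequencies: the finite portion $\{\Delta_j u:-1\leq j\leq J\}$ is compact in $L^p_{\mathrm{loc}}$ thanks to the Rellich theorem, while the tail is controlled by the strict gap $s_1<s_2$. The embeddings into $L^p$ in (4) use Minkowski's inequality for $p\in[1,2]$, the $L^p$-boundedness of the Littlewood-Paley square function for $p\in[2,\infty)$, and direct summation of $\|\Delta_j u\|_{L^\infty}$ when $p=\infty$.

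For the Fatou property (5), I would extract a diagonal subsequence along which $\Delta_j u_{n_k}\rightharpoonup v_j$ weakly in $L^p$ for every fixed $j$; weak lower semicontinuity of the $L^p$-norm combined with Fatou's lemma for the $\ell^r$-sum delivers the estimate $\|u\|_{B^s_{p,r}}\leq C\liminf_k\|u_{n_k}\|_{B^s_{p,r}}$ with $u:=\sum_j v_j$. The multiplier statement (6) reduces to the kernel bound $\|\mathcal{F}^{-1}(f\varphi(2^{-j}\cdot))\|_{L^1}\lesssim 2^{jm}$, obtained by an integration-by-parts argument exploiting the $S^m$-hypothesis and the compact frequency support; Young's convolution inequality then yields $\|f(D)\Delta_j u\|_{L^p}\lesssim 2^{jm}\|\Delta_j u\|_{L^p}$, and repackaging the $\ell^r$-norm concludes. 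The step I expect to be the main obstacle is the Fatou property when $r=\infty$, where one must reconstruct $u$ as a tempered distribution from only weakly convergent blocks; the almost-orthogonality gap $|j-j'|\geq 5\Rightarrow 2^j\mathcal{C}\cap 2^{j'}\widetilde{\mathcal{C}}=\emptyset$ supplied by Proposition 2.1 is the key tool for identifying $\Delta_j u$ with $v_j$ and so closing the loop.
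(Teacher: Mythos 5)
The paper offers no proof of this proposition --- it is quoted verbatim from \cite{book,he} as standard background --- so there is nothing in-paper to compare against; your outline is precisely the classical argument of Bahouri--Chemin--Danchin (Ch.~2) that those citations point to, and it is essentially correct, including the two genuinely delicate points you single out (summing $\sum_j v_j$ in $\mathcal{S}'$ for completeness, and using the almost-orthogonality of the annuli to identify $\Delta_j u$ with the weak limits $v_j$ in the Fatou property). One small correction to item (4): the Littlewood--Paley square-function equivalence $\|u\|_{L^p}\sim\|(\sum_j|\Delta_j u|^2)^{1/2}\|_{L^p}$ is the engine in \emph{both} ranges $1<p\le 2$ and $2\le p<\infty$; what changes is the second step, namely the pointwise inclusion $\ell^p\hookrightarrow\ell^2$ when $p\le 2$ versus Minkowski's inequality $\|(\sum_j|\Delta_j u|^2)^{1/2}\|_{L^p}\le(\sum_j\|\Delta_j u\|_{L^p}^2)^{1/2}$ when $p\ge 2$ --- you have these roles reversed, and the endpoint $p=1$ needs the separate (trivial) triangle-inequality argument since the square-function bound fails there. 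Note also that item (4) as stated in the paper silently requires $s\ge 0$ (the natural statement is for $s=0$); that omission is the paper's, not yours.
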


We then introduce some useful lammas in nonhomogeneous Besov space(homogeneous Besov space is similar).
\begin{lemm}\label{prop}\cite{book,he}
(1) If $s_1<s_2$, $\theta \in (0,1)$, and $(p,r)$ is in $[1,\infty]^2$, then we have
$$ \|u\|_{B^{\theta s_1+(1-\theta)s_2}_{p,r}}\leq \|u\|_{B^{s_1}_{p,r}}^{\theta}\|u\|_{B^{s_2}_{p,r}}^{1-\theta}. $$
(2) If $s\in\mathbb{R},\ 1\leq p\leq\infty,\ \varepsilon>0$, a constant $C=C(\varepsilon)$ exists such that
$$ \|u\|_{B^s_{p,1}}\leq C\|u\|_{B^s_{p,\infty}}\ln\Big(e+\frac {\|u\|_{B^{s+\varepsilon}_{p,\infty}}}{\|u\|_{B^s_{p,\infty}}}\Big). $$
\end{lemm}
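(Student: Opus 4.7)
Both parts are classical interpolation statements for (non-homogeneous) Besov norms, so the plan is to derive (1) from a one-line Hölder argument on the $\ell^r$ sequence side and to derive (2) from a frequency splitting optimized in the splitting index $N$. In both cases the dyadic characterization $\|u\|_{B^s_{p,r}} = \|(2^{js}\|\Delta_j u\|_{L^p})_j\|_{\ell^r(\mathbb{Z}\geq -1)}$ is the only tool really needed; no Littlewood-Paley refinement is required beyond what has already been recorded.

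For (1), I would write, for each $j$,
\begin{equation*}
2^{j(\theta s_1+(1-\theta)s_2)}\|\Delta_j u\|_{L^p}
= \bigl(2^{js_1}\|\Delta_j u\|_{L^p}\bigr)^{\theta}\bigl(2^{js_2}\|\Delta_j u\|_{L^p}\bigr)^{1-\theta},
\end{equation*}
and then apply Hölder's inequality in $\ell^r$ with conjugate exponents $1/\theta$ and $1/(1-\theta)$ to the two factor sequences. This immediately gives
\begin{equation*}
\|u\|_{B^{\theta s_1+(1-\theta)s_2}_{p,r}}\leq \|u\|_{B^{s_1}_{p,r}}^{\theta}\|u\|_{B^{s_2}_{p,r}}^{1-\theta}.
\end{equation*}
The cases $r=\infty$ and the endpoint $\theta\in\{0,1\}$ are trivial, so no extra care is needed. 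I do not expect any obstacle here.

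For (2), the plan is a standard low/high frequency splitting. For an integer parameter $N\geq 0$ to be chosen, I split
\begin{equation*}
\|u\|_{B^s_{p,1}} = \sum_{j=-1}^{N-1} 2^{js}\|\Delta_j u\|_{L^p} + \sum_{j\geq N} 2^{-j\varepsilon}\, 2^{j(s+\varepsilon)}\|\Delta_j u\|_{L^p}.
\end{equation*}
The first sum is bounded by $(N+1)\|u\|_{B^s_{p,\infty}}$, while the second, by geometric summation in $2^{-j\varepsilon}$, is bounded by $C_\varepsilon\, 2^{-N\varepsilon}\|u\|_{B^{s+\varepsilon}_{p,\infty}}$. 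I then choose $N$ to be the smallest nonnegative integer with $2^{N\varepsilon}\geq \|u\|_{B^{s+\varepsilon}_{p,\infty}}/\|u\|_{B^s_{p,\infty}}$, which makes the high-frequency contribution dominated by $C_\varepsilon \|u\|_{B^s_{p,\infty}}$ and gives $N+1 \leq 1+\tfrac{1}{\varepsilon\log 2}\log\bigl(\|u\|_{B^{s+\varepsilon}_{p,\infty}}/\|u\|_{B^s_{p,\infty}}\bigr)_+$ for the low-frequency prefactor. Combining and using $1+\log_+ x \leq C\log(e+x)$ yields exactly the claimed bound.

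The only subtle point, and the one I would spell out carefully, is the degenerate case $\|u\|_{B^s_{p,\infty}}=0$, where the ratio inside the logarithm is undefined; here $u=0$ as a tempered distribution, so both sides are zero and the inequality is trivial. Similarly, the case $\|u\|_{B^{s+\varepsilon}_{p,\infty}}=\infty$ makes the right-hand side infinite and so the inequality is vacuous. Apart from handling this boundary case, both proofs are routine.
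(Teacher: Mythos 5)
The paper does not prove this lemma; it is quoted as a known preliminary from the cited references (Bahouri--Chemin--Danchin and He--Yin), where it appears with essentially the proof you give. Your argument is correct and is the standard one: part (1) by H\"older's inequality applied to the pointwise factorization $2^{j(\theta s_1+(1-\theta)s_2)}\|\Delta_j u\|_{L^p}=(2^{js_1}\|\Delta_j u\|_{L^p})^{\theta}(2^{js_2}\|\Delta_j u\|_{L^p})^{1-\theta}$, and part (2) by the low/high frequency splitting with the splitting index $N$ optimized against the ratio of the two $B^{\cdot}_{p,\infty}$ norms, including the correct treatment of the degenerate case $\|u\|_{B^s_{p,\infty}}=0$.
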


\begin{lemm}\label{product}\cite{book,he}
(1) For any $s>0$ and any $(p,r)$ in $[1,\infty]^2$, the space $L^{\infty} \cap B^s_{p,r}$ is an algebra, and a constant $C=C(s,d)$ exists such that
$$ \|uv\|_{B^s_{p,r}}\leq C(\|u\|_{L^{\infty}}\|v\|_{B^s_{p,r}}+\|u\|_{B^s_{p,r}}\|v\|_{L^{\infty}}). $$
(2) If $1\leq p,r\leq \infty,\ s_1\leq s_2,\ s_2>\frac{d}{p} (s_2 \geq \frac{d}{p}\ \text{if}\ r=1)$ and $s_1+s_2>\max(0, \frac{2d}{p}-d)$, there exists $C=C(s_1,s_2,p,r,d)$ such that
$$ \|uv\|_{B^{s_1}_{p,r}}\leq C\|u\|_{B^{s_1}_{p,r}}\|v\|_{B^{s_2}_{p,r}}. $$
(3) If $1\leq p\leq 2$,  there exists $C=C(p,d)$ such that
$$ \|uv\|_{B^{\frac d p-d}_{p,\infty}}\leq C \|u\|_{B^{\frac d p-d}_{p,\infty}}\|v\|_{B^{\frac d p}_{p,1}}. $$
\end{lemm}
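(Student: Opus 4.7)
The plan is to reduce all three estimates to Bony's paradifferential decomposition
$$ uv = T_u v + T_v u + R(u,v), $$
where $T_u v := \sum_{j} S_{j-1}u\, \Delta_j v$ and $R(u,v) := \sum_{j}\sum_{|j'-j|\leq 1} \Delta_j u\, \Delta_{j'} v$. Two spectral localization facts drive the computation: $\Delta_j(T_u v)$ is frequency-supported in an annulus of size $\sim 2^j$, while $\Delta_k R(u,v)$ receives contributions only from indices $j \geq k-N_0$ for a fixed $N_0$. These, combined with Bernstein's inequality and Young's convolution inequality in $\ell^r(\mathbb{Z})$, should close the whole argument.

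For (1), I would first bound the paraproducts. Using $\|S_{j-1}u\|_{L^\infty}\leq \|u\|_{L^\infty}$ together with the spectral support,
$$ 2^{js}\|\Delta_j(T_u v)\|_{L^p} \lesssim \|u\|_{L^\infty}\,2^{js}\|\Delta_j v\|_{L^p}, $$
and taking $\ell^r$-norms yields $\|T_u v\|_{B^s_{p,r}}\lesssim \|u\|_{L^\infty}\|v\|_{B^s_{p,r}}$; the symmetric bound handles $T_v u$. For the remainder one writes
$$ 2^{ks}\|\Delta_k R(u,v)\|_{L^p} \lesssim \sum_{j\geq k-N_0} 2^{(k-j)s}\cdot 2^{js}\|\Delta_j u\|_{L^\infty}\|\widetilde\Delta_j v\|_{L^p}, $$
and since $s>0$ the factor $2^{(k-j)s}$ is $\ell^1$-summable in $k-j$, so Young's inequality in $\ell^r$ produces $\|R(u,v)\|_{B^s_{p,r}}\lesssim \|u\|_{L^\infty}\|v\|_{B^s_{p,r}}$.

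For (2), the embedding $B^{s_2}_{p,r}\hookrightarrow L^\infty$ (guaranteed by the hypothesis on $s_2$) lets me reuse (1) to control $T_v u$. The delicate pieces are $T_u v$ and $R(u,v)$ when $s_1$ is small or negative. For $T_u v$, I would exploit the extra regularity of $v$ through a refined paraproduct estimate, writing $\|S_{j-1}u\|_{L^\infty}\lesssim 2^{-js_1}\|u\|_{B^{s_1}_{\infty,\infty}}$ when $s_1<0$, combined with the embedding $B^{s_2}_{p,r}\hookrightarrow B^{s_2-d/p}_{\infty,r}$; convergence of the resulting sum requires $s_1+s_2>0$. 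For $R(u,v)$, when $p<2$ Bernstein lets one pass from $L^{p/2}$ to $L^p$ at the cost of $2^{jd(2/p-1)}$, so the hypothesis $s_1+s_2>2d/p-d$ is precisely what renders the geometric sum summable; when $p\geq 2$ one may apply H\"older directly in $L^p$ and only $s_1+s_2>0$ is needed.

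For (3), the borderline case with $s_1=d/p-d<0$ and $v$ in the $\ell^1$-critical space $B^{d/p}_{p,1}$, the paraproducts are handled as in (2), using $B^{d/p}_{p,1}\hookrightarrow L^\infty$ and the refined negative-index bound. The main obstacle, and the step I expect to be the most delicate, is the remainder: at this critical index no geometric sum with positive exponent is available, so one must rely entirely on the $\ell^1$-summability of the dyadic blocks of $v$. Concretely, Bernstein converts the H\"older $L^{p/2}$ bound on $\Delta_j u\,\widetilde\Delta_j v$ into an $L^p$ bound at a cost of $2^{jd/p}$, which matches the regularity of $v$ exactly, leaving a sum bounded by
$$ \sup_j \bigl(2^{j(d/p-d)}\|\Delta_j u\|_{L^p}\bigr)\cdot \sum_j 2^{jd/p}\|\widetilde\Delta_j v\|_{L^p} \leq \|u\|_{B^{d/p-d}_{p,\infty}}\|v\|_{B^{d/p}_{p,1}}. $$
The constraint $p\leq 2$ is needed to make the Bernstein step quantitatively valid and to ensure that the $\ell^\infty$ output norm on $u$ indeed closes.
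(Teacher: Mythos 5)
The paper offers no proof of this lemma: it is stated as a known preliminary and simply cited from \cite{book} and \cite{he}, where it is established by exactly the Bony decomposition $uv=T_uv+T_vu+R(u,v)$ argument you outline, so there is no in-paper proof to compare against beyond the citation. Your sketch follows that standard route correctly (including the genuinely delicate endpoint remainder in (3), where the Bernstein step at the spectral scale $2^j$ of the product and the $\ell^1$-summability of the blocks of $v$ are precisely what close the estimate); the only slips are cosmetic, e.g.\ in (2) the bound on $\|S_{j-1}u\|_{L^{\infty}}$ should be phrased in terms of $\|u\|_{B^{s_1-d/p}_{\infty,\infty}}$ obtained from $B^{s_1}_{p,r}$ by embedding, and the condition $s_1+s_2>0$ is what the remainder (rather than the paraproduct $T_uv$, which only needs $s_2\geq d/p$) requires.
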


Now we state some useful results in the transport equation theory, which are crucial to the proofs of our main theorem later.
\begin{equation}\label{transport}
\left\{\begin{array}{l}
    f_t+v\cdot\nabla f=g,\ x\in\mathbb{R}^d,\ t>0, \\
    f(0,x)=f_0(x).
\end{array}\right.
\end{equation}

\begin{lemm}\label{priori estimate}\cite{book,li2}
Let $s\in\mathbb{R},\ 1\leq p,r\leq\infty$.
There exists a constant $C$ such that for all solutions $f\in L^{\infty}([0,T];B^s_{p,r})$ of \eqref{transport} in one dimension with initial data $f_0$ in $B^s_{p,r}$, and $g$ in $L^1([0,T];B^s_{p,r})$, we have, for a.e. $t\in[0,T]$,
$$ \|f(t)\|_{B^s_{p,r}}\leq e^{CV(t)}\Big(\|f_0\|_{B^s_{p,r}}+\int_0^t e^{-CV(t')}\|g(t')\|_{B^s_{p,r}}dt'\Big) $$
with
\begin{equation*}
  V'(t)=\left\{\begin{array}{ll}
  \|\nabla v\|_{B^{s+1}_{p,r}},\ &\text{if}\ s>\max(-\frac 1 2,\frac 1 {p}-1), \\
  \|\nabla v\|_{B^{s}_{p,r}},\ &\text{if}\ s>\frac 1 {p}\ \text{or}\ (s=\frac 1 {p},\ p<\infty, \ r=1),
  \end{array}\right.
\end{equation*}
and when $s=\frac 1 p-1,\ 1\leq p\leq 2,\ r=\infty,\ \text{and}\ V'(t)=\|\nabla v\|_{B^{\frac 1 p}_{p,1}}$.
\end{lemm}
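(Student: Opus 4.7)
The plan is to follow the classical frequency-localized energy method of Bahouri--Chemin--Danchin: apply each Littlewood-Paley block $\Delta_j$ to the transport equation, carry out an $L^p$ energy estimate at that scale, and then recombine by weighting with $2^{js}$ and taking the $\ell^r$ norm in $j$. The central technical ingredient is a commutator estimate for $[v\cdot\nabla,\Delta_j]f$.

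First I would localize. Setting $f_j := \Delta_j f$, applying $\Delta_j$ to (\ref{transport}) gives
$$ \partial_t f_j + v\cdot\nabla f_j = \Delta_j g + [v\cdot\nabla, \Delta_j]f. $$
A standard $L^p$ energy estimate on this transport equation (where the only boundary-type term produced by integration by parts is a factor of $\mathrm{div}\, v$) yields
$$ \|f_j(t)\|_{L^p} \leq \|f_j(0)\|_{L^p} + \int_0^t \bigl(C\|\mathrm{div}\, v\|_{L^\infty}\|f_j\|_{L^p} + \|\Delta_j g\|_{L^p} + \|[v\cdot\nabla,\Delta_j]f\|_{L^p}\bigr)\,d\tau. $$
Next I would estimate the commutator via a Bony decomposition of $v\,\nabla f$ into paraproducts plus a remainder. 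Using that the convolution kernel of $[T_v,\Delta_j]$ gains one derivative in $v$, and bounding each piece with the standard paraproduct and remainder estimates, one arrives at
$$ 2^{js}\|[v\cdot\nabla,\Delta_j]f(\tau)\|_{L^p} \leq C c_j(\tau)\, V'(\tau)\, \|f(\tau)\|_{B^s_{p,r}}, $$
with $(c_j(\tau))_j$ of unit $\ell^r$ norm and $V'(\tau)$ as in the statement.

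The exact choice of $V'$ encodes the minimal regularity of $v$ needed to close the remainder: once $s$ lies above the Lipschitz threshold $1/p$, we have $\nabla v \in L^\infty$ by Proposition \ref{prop0}(4), and the product rule of Lemma \ref{product}(1) lets $\|\nabla v\|_{B^s_{p,r}}$ suffice; in the low-regularity range $s>\max(-1/2,1/p-1)$ one must spend an extra derivative on $v$ since the paraproduct remainder requires $\nabla v$ in a space embedding into $L^\infty$. With the commutator bound in hand, I would multiply the localized energy inequality by $2^{js}$, take $\ell^r$ norms in $j$, absorb $\|\mathrm{div}\, v\|_{L^\infty}$ into $V'$, and apply Gronwall's lemma to obtain the stated integral bound.

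The main obstacle is the endpoint case $s=1/p-1$, $p\in[1,2]$, $r=\infty$. There the ordinary paraproduct approach fails because the remainder $R(v,\nabla f)$ is not a priori well-defined as a tempered distribution; one must instead invoke the sharper product rule of Lemma \ref{product}(3), which bounds a product in $B^{1/p-1}_{p,\infty}$ by one factor in that space times another in $B^{1/p}_{p,1}$. This is precisely the mechanism that forces the non-standard prescription $V'(t)=\|\nabla v\|_{B^{1/p}_{p,1}}$ in the statement.
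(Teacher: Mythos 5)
The paper states this lemma without proof, citing \cite{book,li2}, and your outline reproduces the standard argument given there (Littlewood--Paley localization, the $L^p$ energy estimate with the $\mathrm{div}\,v$ term, the commutator bound $2^{js}\|[v\cdot\nabla,\Delta_j]f\|_{L^p}\leq Cc_jV'\|f\|_{B^s_{p,r}}$ via Bony decomposition, and Gronwall), including the correct identification of the endpoint $s=\frac1p-1$, $r=\infty$ as the case requiring the refined product law. This is essentially the same approach as the cited proof; the only detail worth noting is that for $p=\infty$ the localized $L^p$ estimate is obtained by characteristics rather than integration by parts.
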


\begin{lemm}\label{existence}\cite{book}
Let $1\leq p\leq p_1\leq\infty,\ 1\leq r\leq\infty,\ s> -d\min(\frac 1 {p_1}, \frac 1 {p'})$. Let $f_0\in B^s_{p,r}$, $g\in L^1([0,T];B^s_{p,r})$, and let $v$ be a time-dependent vector field such that $v\in L^\rho([0,T];B^{-M}_{\infty,\infty})$ for some $\rho>1$ and $M>0$, and
$$
  \begin{array}{ll}
    \nabla v\in L^1([0,T];B^{\frac d {p_1}}_{p_1,\infty}), &\ \text{if}\ s<1+\frac d {p_1}, \\
    \nabla v\in L^1([0,T];B^{s-1}_{p,r}), &\ \text{if}\ s>1+\frac d {p_1}\ or\ (s=1+\frac d {p_1}\ and\ r=1).
  \end{array}
$$
Then the equation \eqref{transport} has a unique solution $f$ in \\
-the space $C([0,T];B^s_{p,r})$, if $r<\infty$, \\
-the space $\Big(\bigcap_{s'<s}C([0,T];B^{s'}_{p,\infty})\Big)\bigcap C_w([0,T];B^s_{p,\infty})$, if $r=\infty$.
\end{lemm}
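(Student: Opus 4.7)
The plan is to derive both existence and uniqueness from the a priori estimate of Lemma~\ref{priori estimate} through a Friedrichs-type approximation scheme, exploiting the linearity of \eqref{transport} to pass to the limit. The regularity assumption $s>-d\min(1/p_1,1/p')$ (with $p'$ the conjugate exponent of $p$) is precisely what makes the paraproduct decomposition of $v\cdot\nabla f$ controllable in $B^s_{p,r}$ when $\nabla v$ lies only in $B^{d/p_1}_{p_1,\infty}$; the very weak hypothesis $v\in L^\rho([0,T];B^{-M}_{\infty,\infty})$ serves only to make sense of $v\cdot\nabla f$ as a time-dependent tempered distribution after the a priori control on $f$ has been secured.

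First I would regularize the data by setting $v_n:=S_n v$, $f_0^n:=S_n f_0$ and $g_n:=S_n g$. Each $v_n$ is smooth and globally Lipschitz in $x$, so its flow is well defined on $[0,T]$ and the corresponding regularized transport equation is solved explicitly by the method of characteristics, producing a smooth solution $f_n$ on $[0,T]\times\mathbb{R}^d$. Applying Lemma~\ref{priori estimate} to $f_n$ (the hypotheses on $\nabla v$ in the statement being tailored to bound the associated $V_n(t)$ uniformly in $n$) yields
\[
\|f_n\|_{L^\infty([0,T];B^s_{p,r})}\le e^{CV(T)}\bigl(\|f_0\|_{B^s_{p,r}}+\|g\|_{L^1([0,T];B^s_{p,r})}\bigr).
\]
From the equation $\partial_t f_n=g_n-v_n\cdot\nabla f_n$ I would then deduce a uniform bound on $\partial_t f_n$ in a space with one fewer derivative, providing equicontinuity in time. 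A compactness/diagonal extraction combined with the Fatou property of Proposition~\ref{prop0}(5) produces a subsequence converging to some $f\in L^\infty([0,T];B^s_{p,r})$ which solves \eqref{transport} in $\mathcal{S}'$.

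Uniqueness is immediate, since the difference of two solutions satisfies \eqref{transport} with zero data and zero source, so Lemma~\ref{priori estimate} forces it to vanish. For the time continuity, when $r<\infty$ one shows that $(f_n)$ is Cauchy in $C([0,T];B^s_{p,r})$ by applying the same a priori inequality to $f_n-f_m$; this uses $\|S_n f_0-f_0\|_{B^s_{p,r}}\to 0$ and the analogous statements for $g$ and $v$, all of which hold by Proposition~\ref{prop0}(2) precisely because $r<\infty$. When $r=\infty$ the space $B^s_{p,\infty}$ fails to be separable, so one settles for strong continuity in each $B^{s'}_{p,\infty}$ with $s'<s$ (obtained by interpolating the uniform $B^s$ bound with the equicontinuity in time, as in Lemma~\ref{prop}(1)) together with weak-$*$ continuity into $B^s_{p,\infty}$. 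The main obstacle I anticipate is handling the borderline cases of the assumption on $\nabla v$, in particular $s=1+d/p_1$ with $r=1$, where the estimate of $v\cdot\nabla f$ sits exactly at the threshold of Lemma~\ref{product} and has to be obtained by hand through a careful Bony decomposition rather than invoked as a black box.
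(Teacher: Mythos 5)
The paper offers no proof of this lemma: it is quoted verbatim (up to minor typos) from \cite{book}, so there is no in-paper argument to compare against. Your sketch is, in outline, exactly the standard proof of that cited result: regularize, solve the smooth problem by characteristics, get uniform bounds from the a priori estimate of Lemma~\ref{priori estimate}, pass to the limit by compactness and the Fatou property, read off uniqueness from the same estimate applied to the difference of two solutions, and treat time continuity separately according to whether $r$ is finite. The one place where your variant genuinely deviates from \cite{book}, and where it picks up a real technical cost, is the decision to mollify the velocity field as well ($v_n=S_nv$): in the Cauchy-sequence step the difference $f_n-f_m$ is then transported by $v_n$ with the extra source term $(v_n-v_m)\cdot\nabla f_m$, and since $\nabla v$ is only assumed to lie in $B^{d/p_1}_{p_1,\infty}$ (third index $\infty$), Proposition~\ref{prop0}(2) does \emph{not} give $S_n\nabla v\to\nabla v$ in that norm, so this term cannot be sent to zero by strong convergence alone; you would have to handle it by a dominated-convergence argument on the paraproduct blocks or by exploiting the extra smoothness of $f_m$. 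The reference sidesteps this entirely by mollifying only $f_0$ and $g$ while keeping $v$ fixed (the flow of $v$ still exists because the full hypothesis there includes $\nabla v\in L^1_T(L^\infty)$, a condition the paper's transcription of the lemma has dropped). Apart from that wrinkle, and the fact that the uniform convergence of the tail $\sum_{j\ge N}2^{jsr}\|\Delta_jf(t)\|_{L^p}^r$ needed for continuity when $r<\infty$ deserves more than the one line you give it, the proposal is sound.
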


\begin{lemm}\label{continuity}\cite{li2}
Let $1\leq p\leq\infty,\ 1\leq r<\infty,\ s>\frac d p\ (or \ s=\frac d p,\ p<\infty,\ r=1)$. Denote $\bar{\mathbb{N}}=\mathbb{N}\cup\{\infty\}$. Let $(v^n)_{n\in\bar{\mathbb{N}}}\in C([0,T];B^{s+1}_{p,r})$. Assume that $(f^n)_{n\in\bar{\mathbb{N}}}$ in $C([0,T];B^s_{p,r})$ is the solution to
\begin{equation}
\left\{\begin{array}{l}
    f^n_t+v^n\cdot\nabla f^n=g,\ x\in\mathbb{R}^d,\ t>0, \\
    f^n(0,x)=f_0(x)
\end{array}\right.
\end{equation}
with initial data $f_0\in B^s_{p,r},\ g\in L^1([0,T];B^s_{p,r})$ and that for some $\alpha\in L^1([0,T])$, $\sup\limits_{n\in\bar{\mathbb{N}}}\|v^n(t)\|_{B^{s+1}_{p,r}}\leq \alpha(t)$.
If $v^n \rightarrow v^{\infty}$ in $L^1([0,T];B^s_{p,r})$, then $f^n \rightarrow f^{\infty}$ in $C([0,T];B^s_{p,r})$.
\end{lemm}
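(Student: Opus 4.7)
The strategy is to first prove convergence of $f^n$ to $f^\infty$ in a weaker Besov norm via the difference equation, then upgrade to convergence in $B^s_{p,r}$ by combining the uniform bound on $\{f^n\}$ with the hypothesis $r<\infty$.

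First I subtract the two transport equations to obtain, for $w^n:=f^n-f^\infty$,
\[\partial_t w^n+v^n\cdot\nabla w^n=(v^\infty-v^n)\cdot\nabla f^\infty,\qquad w^n(0,\cdot)=0.\]
Applying Lemma \ref{priori estimate} at regularity level $s-1$ (admissible thanks to $s>d/p$) and bounding the forcing by Lemma \ref{product}(2) with $s_1=s-1,\ s_2=s$—the condition $s_1+s_2>\max(0,2d/p-d)$ follows from $s>d/p$—yields
\[\|w^n(t)\|_{B^{s-1}_{p,r}}\le Ce^{C\int_0^t\alpha(\tau)d\tau}\int_0^t\|v^n-v^\infty\|_{B^s_{p,r}}\|f^\infty\|_{B^{s}_{p,r}}\,d\tau.\]
Lemma \ref{priori estimate} applied to the original equation, together with the uniform bound on $\|v^n\|_{B^{s+1}_{p,r}}$, produces $\sup_n\|f^n\|_{L^\infty([0,T];B^s_{p,r})}\le M$; in particular $\|f^\infty\|_{L^\infty([0,T];B^s_{p,r})}\le M$. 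Combined with $v^n\to v^\infty$ in $L^1([0,T];B^s_{p,r})$, this gives $w^n\to 0$ in $L^\infty([0,T];B^{s-1}_{p,r})$.

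To upgrade to convergence in $B^s_{p,r}$, I split $w^n=S_Nw^n+(\mathrm{Id}-S_N)w^n$. A Bernstein-type estimate yields $\|S_Nw^n\|_{B^s_{p,r}}\le C2^N\|w^n\|_{B^{s-1}_{p,r}}$, which vanishes as $n\to\infty$ for each fixed $N$. For the high-frequency piece, I would use the characteristic-flow representation
\[f^n(t,\cdot)=f_0\circ(\psi^n_t)^{-1}+\int_0^t g(\tau,\cdot)\circ\psi^n_\tau\circ(\psi^n_t)^{-1}\,d\tau,\]
where $\psi^n$ is the flow of $v^n$. The uniform bound on $\|v^n\|_{B^{s+1}_{p,r}}$ provides uniform control of $\psi^n$ and its inverse, so Besov composition estimates transfer the equi-small high-frequency tail property of the fixed data $f_0\in B^s_{p,r}$ and $g\in L^1([0,T];B^s_{p,r})$—automatic because $r<\infty$—to the family $\{f^n\}$. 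A standard $\varepsilon/2$ argument, choosing $N$ large so the high-frequency tail is uniformly small and then $n$ large so the low-frequency piece is small, gives $w^n\to 0$ in $L^\infty([0,T];B^s_{p,r})$. Since $f^\infty\in C([0,T];B^s_{p,r})$ by Lemma \ref{existence} and the convergence is uniform in $t$, we conclude in $C([0,T];B^s_{p,r})$.

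The main obstacle is precisely this uniform tail-smallness step: a bounded sequence in $B^s_{p,r}$ need not have equi-small tails in the $l^r$ norm, so the Fatou property from Proposition \ref{prop0}(5) alone cannot close the gap between convergence in $B^{s-1}_{p,r}$ and convergence in $B^s_{p,r}$. The characteristic-flow route bypasses this by reducing to equi-smallness of the fixed data, but it requires the technical machinery of composition operators on Besov spaces acting on flows of $B^{s+1}_{p,r}$ vector fields. This is also where the hypothesis $r<\infty$ is genuinely needed.
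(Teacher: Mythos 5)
First, a remark on the comparison itself: the paper offers no proof of this lemma --- it is quoted verbatim from \cite{li2} --- so your proposal must be measured against the argument in that reference, which is the standard one. There, one does not descend to the level $s-1$ at all. Instead one smooths the \emph{data}: let $f^{n,M}$ solve the same transport equation with velocity $v^n$ but with data $S_M f_0$ and $S_M g$. The remainder $f^n-f^{n,M}$ solves the transport equation with data $(\mathrm{Id}-S_M)f_0$ and $(\mathrm{Id}-S_M)g$, so Lemma \ref{priori estimate} with $V'(t)=\|\nabla v^n\|_{B^s_{p,r}}\le\alpha(t)$ bounds it in $L^\infty([0,T];B^s_{p,r})$ by $C_\alpha\bigl(\|(\mathrm{Id}-S_M)f_0\|_{B^s_{p,r}}+\|(\mathrm{Id}-S_M)g\|_{L^1_TB^s_{p,r}}\bigr)$, \emph{uniformly in $n$}; this tends to $0$ as $M\to\infty$ precisely because $r<\infty$. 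For fixed $M$ the smoothed solutions lie in $B^{s+1}_{p,r}$ (propagation of regularity, using $\nabla v^n\in B^s_{p,r}$), so $f^{n,M}-f^{\infty,M}$ can be estimated directly at level $s$, with forcing $(v^\infty-v^n)\cdot\nabla f^{\infty,M}$ controlled by $\|v^n-v^\infty\|_{B^s_{p,r}}\|f^{\infty,M}\|_{B^{s+1}_{p,r}}$ via the algebra property of $B^s_{p,r}$ for $s>d/p$. A two-parameter $\varepsilon$-argument then closes the proof.

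Your route has a genuine soft spot exactly where you locate it. The sentence ``Besov composition estimates transfer the equi-small high-frequency tail property of the fixed data to the family $\{f^n\}$'' is not a consequence of a composition estimate alone: composition with a diffeomorphism does not commute with the Littlewood--Paley blocks, so smallness of $(\mathrm{Id}-S_N)f_0$ says nothing directly about $(\mathrm{Id}-S_N)\bigl(f_0\circ(\psi^n_t)^{-1}\bigr)$. To make this step rigorous you must approximate $f_0$ by $S_Mf_0$, invoke uniform-in-$n$ boundedness of $f\mapsto f\circ(\psi^n_t)^{-1}$ on \emph{two} regularity levels ($B^s_{p,r}$ and $B^{s+1}_{p,r}$), and then run the same two-parameter argument --- at which point you have reconstructed the standard proof with the added, and unnecessary, overhead of flow maps and Besov composition theorems that the paper's toolkit does not contain. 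A second, smaller issue: your level-$(s-1)$ forcing estimate via Lemma \ref{product}(2) needs $s_1+s_2=2s-1>\max(0,\tfrac{2d}{p}-d)$, which fails for $p>2$ and $d/p<s\le 1/2$ (and is also delicate in the endpoint case $s=d/p$, $r=1$); the level-$s$ argument avoids this entirely because it only ever multiplies two elements of the algebra $B^s_{p,r}$. So the skeleton of your plan can be completed, but the decisive high-frequency step is only gestured at, and the data-smoothing argument is both shorter and valid on the full stated range of indices.
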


\section{Local well-posedness}
\par
In this section, we establish local well-posedness of \eqref{eq1} in Besov spaces. In fact, though $u\in B^{s}_{p,r}$ $(1\leq p<\infty, s>0)$, and u is not a constant, the right hand side of (\ref{eq1})
$$\int_{-\infty}^{x}\frac{1}{2}u_x^2(z)dz+g(t)\notin B^{s}_{p,r}.$$
For example, if $u\in H^2$, let $g(t)=0,f(t,x):=\int_{-\infty}^{x}\frac{1}{2}u_x^2(z)dz$, we have
$$f_x(t,x)=\frac{1}{2}u_x^2(z)\geq 0.$$
This implies $f(t,x)\notin L^p(\mathbb{R})$ $(1\leq p<\infty)$, let along $f(t,x)\in B^{s}_{p,r}$. To overcome this difficulty, we firstly introduce the following new function spaces.
\begin{defi}
Let $s\geq 2$ and $1\leq p,r \leq\infty.$ Set
\begin{equation}
  E^s_{p,r}\triangleq\left \{\begin{array}{l}
   L^{\infty}\cap\dot{B}^{s-1}_{p,r}\cap\dot{B}^{s-2}_{p,r}\cap\dot{W}^{1,q},\quad\quad\quad when\quad s>2,  \\
   E^2_{p,r},\quad\quad\quad\quad\quad\quad\quad\quad\quad\quad\quad\quad\quad when\quad s=2,
  \end{array}\right.
\end{equation}
\begin{equation}
  \bar{E}^s_{p,r}\triangleq\left \{\begin{array}{l}
   L^{\infty}\cap\dot{B}^{s-1}_{p,r},\quad\quad\quad\quad\quad when\quad s>2,  \\
   \bar{E}^2_{p,r},\quad\quad\quad\quad\quad\quad\quad\quad when\quad s=2,
  \end{array}\right.
\end{equation}
where
\begin{equation}\label{equ1-1-5}
  E^2_{p,r}\triangleq\left \{\begin{array}{l}
   L^{\infty}\cap\dot{B}^1_{p,r}\cap\dot{B}^2_{p,r}\cap \dot{W}^{1,q},\quad r=p,\quad when\quad 1\leq p\leq2,  \\
   L^{\infty}\cap\dot{B}^1_{p,r}\cap\dot{B}^2_{p,r}\cap \dot{W}^{1,q},\quad r=2,\quad when\quad 2\leq p\leq\infty,
  \end{array}\right.
\end{equation}
\begin{equation}\label{equ1-1-5}
  \bar{E}^2_{p,r}\triangleq\left \{\begin{array}{l}
   L^{\infty}\cap\dot{B}^1_{p,r},\quad r=p,\quad when\quad 1\leq p\leq2,  \\
   L^{\infty}\cap\dot{B}^1_{p,r},\quad r=2,\quad when\quad 2\leq p\leq\infty,
  \end{array}\right.
\end{equation}
 and $\frac{1}{p}+\frac{1}{q}=1$.
\end{defi}

For example, if $s=p=r=2$, then $E^{2}_{2,2}=L^{\infty}\cap\dot{H}^1\cap\dot{H}^2$, which implies $u\in B^{\frac{3}{2}}_{\infty ,2}$ and $u_x\in H^1$ by proposition \ref{prop0} ($\dot{H}^2\hookrightarrow \dot{B}^{\frac{3}{2}}_{\infty ,2},L^{\infty}\cap\dot{B}^{\frac{3}{2}}_{\infty ,2}=B^{\frac{3}{2}}_{\infty ,2}$). If $s=2,p=r=1$, $E^{2}_{1,1}=L^{\infty}\cap\dot{B}^1_{1,1}\cap\dot{B}^2_{1,1}$, which implies $u\in B^{1}_{\infty ,1}$ and $u_x\in B^{1}_{1,1}$. Actually, when $1\leq p\leq 2$, we have $\dot{B}^{s-1}_{p,p}\cap\dot{B}^{s-2}_{p,p}\subset\dot{W}^{1,q}$, so the space $\dot{W}^{1,q}$ is unnecessary. Moreover, we can easily prove that both $E^2_{p,r}$ and $\bar{E}^s_{p,r}$ are Bananch spaces.

Here is our main result.
\begin{theo}\label{theorem}
Let $u_0\in E^s_{p,r}$. Then there exists a time $T>0$ such that \eqref{eq1} has a unique solution u in $C([0,T];E^s_{p,r})\cap C^1([0,T];\bar{E}^s_{p,r}).$ Moreover the solution depends continuously on the initial data.
\end{theo}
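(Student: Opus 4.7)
My plan is a Friedrichs iteration coupled to the linear-transport machinery of Section~2. Set $u^{0}\equiv 0$ and, given $u^{n}$, let $u^{n+1}$ be the unique solution (Lemma~\ref{existence}) of
\begin{equation*}
u^{n+1}_{t}+u^{n}u^{n+1}_{x}=F^{n},\qquad u^{n+1}(0,x)=u_{0}(x),\qquad F^{n}\triangleq\tfrac{1}{2}\!\int_{-\infty}^{x}\!(u^{n}_{x})^{2}(z)\,dz+g(t).
\end{equation*}
The strategy is to bound $(u^{n})$ uniformly in $L^{\infty}([0,T];E^{s}_{p,r})$ on a common interval, show it is Cauchy in a weaker norm, and pass to the limit; uniqueness and continuous dependence will then follow from the same estimates.

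The crux, and the reason behind the specific design of $E^{s}_{p,r}$, is controlling $F^{n}$. On the line the antiderivative $\partial_{x}^{-1}=\mathcal{F}^{-1}((i\xi)^{-1}\mathcal{F})$ is an $S^{-1}$-multiplier with a singularity at the origin: it is unbounded on $B^{\sigma}_{p,r}$ but continuous from $\dot{B}^{\sigma}_{p,r}$ into $\dot{B}^{\sigma+1}_{p,r}$. This forces the Besov components of $E^{s}_{p,r}$ to be homogeneous, while $L^{\infty}$ is included separately both to carry $g(t)$ and to close the algebra estimates of Lemma~\ref{product}. At the endpoint $s=2$ the algebra property for $(u^{n}_{x})^{2}$ fails, so one controls $u^{n}_{x}\cdot u^{n}_{x}$ by H\"older using the $\dot{W}^{1,q}$ component with conjugate exponent $q$, paired against the embedding $\dot{B}^{0}_{p,r}\hookrightarrow L^{p}$ from Proposition~\ref{prop0}(4); this is exactly what dictates the dichotomy $r=p$ for $p\le 2$ and $r=2$ for $p\ge 2$ in the definition. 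Combining these bounds on $F^{n}$ with the algebra estimate for $u^{n}u^{n+1}_{x}$ in $L^{\infty}\cap\dot{B}^{s-1}_{p,r}$ and applying Lemma~\ref{priori estimate} componentwise gives
\begin{equation*}
\|u^{n+1}(t)\|_{E^{s}_{p,r}}\le e^{CV_{n}(t)}\Bigl(\|u_{0}\|_{E^{s}_{p,r}}+C\!\int_{0}^{t}e^{-CV_{n}(t')}\bigl(1+\|u^{n}(t')\|_{E^{s}_{p,r}}^{2}\bigr)dt'\Bigr),
\end{equation*}
with $V_{n}'\lesssim\|u^{n}\|_{E^{s}_{p,r}}$. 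A routine induction then produces $T=T(\|u_{0}\|_{E^{s}_{p,r}})>0$ on which $(u^{n})$ is bounded in $L^{\infty}([0,T];E^{s}_{p,r})$.

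Next, the difference $w^{n}=u^{n+1}-u^{n}$ satisfies a linear transport equation with vector field $u^{n}$ and source expressible through $w^{n-1}$ and $\partial_{x}^{-1}\bigl((u^{n}_{x}+u^{n-1}_{x})w^{n-1}_{x}\bigr)$; Lemma~\ref{priori estimate} applied at one regularity scale lower yields a geometric contraction, so $(u^{n})$ converges in that weaker norm. The Fatou property (Proposition~\ref{prop0}(5)) then places the limit $u$ in $L^{\infty}([0,T];E^{s}_{p,r})$; interpolating this with the strong convergence identifies $u$ as a solution of~\eqref{eq1}, the equation itself yields $u_{t}\in C([0,T];\bar{E}^{s}_{p,r})$, and Lemma~\ref{continuity} applied to the linearised equation around $u$ upgrades time-membership to $u\in C([0,T];E^{s}_{p,r})$. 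Uniqueness in this class is the same difference estimate applied to any two candidate solutions, and continuous dependence on $u_{0}$ follows from one more application of Lemma~\ref{continuity} together with the uniform bounds. The main obstacle throughout is really the first step: once the twin facts that $\partial_{x}^{-1}$ only acts nicely on homogeneous scales and that $(u_{x})^{2}$ is product-critical at $s=2$ are absorbed into the definition of $E^{s}_{p,r}$, the remainder is the standard transport-iteration pattern.
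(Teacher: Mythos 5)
Your proposal follows essentially the same route as the paper: the same Friedrichs iteration on the linearised transport equation, uniform bounds from Lemma~\ref{priori estimate} on a time interval determined by $\|u_0\|_{E^s_{p,r}}$, a Cauchy/contraction estimate one regularity level down (in $L^\infty\cap\dot H^1$, resp.\ $L^\infty\cap\dot W^{1,p}$), the Fatou property to place the limit in $E^s_{p,r}$, and difference estimates for uniqueness and continuous dependence; your treatment of the source term $F^n$ via the $L^\infty$ component and the H\"older pairing with $\dot W^{1,q}$ matches the paper's estimates on $G^n$, $G^n_x$, $G^n_{xx}$. The only cosmetic divergence is that you invoke Lemma~\ref{continuity} to recover time-continuity where the paper argues ``as in the proof of Lemma~\ref{existence}'', which is not a substantive difference.
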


\begin{proof}

We use six steps to prove Theorem \ref{theorem}. Without loss of generality, we consider the case $g(t)=0$ in (\ref{eq1}) for $g(t)$ is just a bound term independent of x. And for simplicity, we first consider the case $E^2_{2,2}=L^{\infty}\cap\dot{H}^1\cap\dot{H}^2=B^{\frac{3}{2}}_{\infty ,2}\cap\dot{H}^1\cap\dot{H}^2.$\\

\textbf{Step one. Constructing approximate solution.} \\

We firstly set $u^0\triangleq0,$ and define a sequence $(u^n)_{n\in\mathbb{N}}$ of smooth functions by solving the following linear transport equations:
\begin{equation}\label{eq2}
    \left\{\begin{array}{l}
    u_t^{n+1}+u^nu_x^{n+1}=\int_{-\infty}^{x}\frac{1}{2}(u_x^n)^2(z)dz, \\
    u^{n+1}|_{t=0}=u_0.
    \end{array}\right.
\end{equation}
Define
$$G^n=\int_{-\infty}^{x}\frac{1}{2}(u_x^n)^2(z)dz.$$
We assume that $u^n\in C([0,T];E^2_{2,2})\cap C([0,T];\bar{E}^2_{2,2})$  for all $T>0$, then we have
$$\|G^n\|_{L^{\infty}}\leq C\|u^n_x\|^2_{L^2}\leq C\|u^n\|^2_{E^2_{2,2}},$$
$$\|G^n_x\|_{L^{2}}\leq C\|u^n_x\|_{L^{\infty}}\|u^n_x\|_{L^2}\leq C\|u^n\|^2_{E^2_{2,2}},$$
$$\|G^n_{xx}\|_L^{2}\leq C\|u^n_x\|_{L^{\infty}}\|u^n_{xx}\|_{L^2}\leq C\|u^n\|^2_{E^2_{2,2}}.$$
This implies that
\begin{align}\label{ineq1}
    \|G^n\|_{E^2_{2,2}}\leq C\|u^n\|^2_{E^2_{2,2}}.
\end{align}
So $G^n\in L^{\infty}([0,T];E^2_{2,2})$.
Similar to the proof of Lemma \ref{existence}, we deduce that \eqref{eq2} has a global solution $u^{n+1}$ which belongs to $C([0,T];E^2_{2,2})\cap C([0,T];\bar{E}^2_{2,2})$ for all $T>0$.
\quad\\

\textbf{Step two. Uniform bounds.}  \\

Using Lemma \ref{priori estimate}, we have
\begin{equation}\label{fuzhiineq23}
    \|u^{n+1}(t)\|_{L^{\infty}}\leq e^{C\int_0^t \|u^n\|_{E^2_{2,2}}dt'}
    \Big(\|u_0\|_{E^2_{2,2}}+\int_0^t e^{-C\int_0^{t'} \|u^n\|_{E^2_{2,2}} dt''}
    \|G^n\|_{E^2_{2,2}}dt'\Big) ,
\end{equation}
\begin{equation}\label{fuzhiineq25}
    \|u^{n+1}_x(t)\|_{H^{1}}\leq e^{C\int_0^t \|u^n\|_{E^2_{2,2}}dt'}
    \Big(\|u_0\|_{E^2_{2,2}}+\int_0^t e^{-C\int_0^{t'} \|u^n\|_{E^2_{2,2}} dt''}
    \|G^n\|_{E^2_{2,2}}dt'\Big) ,
\end{equation}
Combining (\ref{fuzhiineq23}), (\ref{fuzhiineq25}) and \eqref{ineq1}, we get
\begin{align}
    \|u^{n+1}(t)\|_{E^2_{2,2}} \leq e^{C\int_0^t \|u^n\|_{E^2_{2,2}}dt'}
    \Big(\|u_0\|_{E^2_{2,2}}+\int_0^t e^{-C\int_0^{t'} \|u^n\|_{E^2_{2,2}} dt''}
    \|u^n\|^2_{E^2_{2,2}}dt'\Big)  \notag\\.  \label{ineq3}
\end{align}
Then, we fix a $T>0$ such that $ 2C^2 T\|u_0\|_{E^2_{2,2}} <1 $ and suppose that
\begin{equation}\label{ineq4}
    \forall t\in [0,T],\ \|u^n(t)\|_{E^2_{2,2}} \leq
    \frac{C\|m_0\|_{E^2_{2,2}}}{(1-2C^2 t\|u_0\|_{E^2_{2,2}})}.
\end{equation}
Plugging \eqref{ineq4} into \eqref{ineq3} and using a simple calculation yield
\begin{align}\label{yzyj1}
    \|u^{n+1}(t)\|_{E^2_{2,2}} &\leq
    C\|u_0\|_{E^2_{2,2}}(1-2C^2 t\|u_0\|_{E^2_{2,2}})^{-\frac 12}
    \Big(1+C^2\|u_0\|_{E^2_{2,2}} \int_0^t (1-2C^2 t\|u_0\|_{E^2_{2,2}})^{-\frac 32} dt'\Big)  \\
    &\leq \frac{C\|u_0\|_{E^2_{2,2}}}{(1-2C^2 t\|u_0\|_{E^2_{2,2}})}\approx C_{u_0}.
\end{align}
Therefore, $\{u^n\}_{n\in \mathbb{N}}$ is bounded in $L^{\infty}([0,T];{E^2_{2,2}})$.
\quad\\

\textbf{Step three. Cauchy sequence.}  \\
\par
We are going to prove that $\{u^n\}_{n\in \mathbb{N}}$ is a Cauchy sequence in $L^{\infty}([0,T];L^{\infty})$ and $(u_x^n)_{n\in \mathbb{N}}$ is a Cauchy sequence in $L^{\infty}([0,T];L^2)$. For this purpose, set $u^{n+m+1}-u^{n+1}$, we obtain
\begin{equation}
    \partial_t(u^{n+m+1}-u^{n+1})+u^{n+m+1}\partial_x(u^{n+m+1}-u^{n+1})=-(u^{n+m}-u^{n})u^{n+1}_x+\int_{\infty}^{x}\frac{1}{2} (u^{n+m}_x-u^{n}_x)(u^{n+m}_x+u^{n}_x)dz.
\end{equation}

By virtue to Lemma \ref{priori estimate} and \eqref{yzyj1}, we have
\begin{align}\label{ineq111}
    \quad &\|u^{n+m+1}-u^{n+1}\|_{L^{\infty}} \leq C_{u_0}\int_0^t\|u^{n+m}-u^{n}\|_{\dot{H}^{1}\cap L^{\infty}}ds.
\end{align}
and
\begin{align}\label{ineq121}
    \|u^{n+m+1}-u^{n+1}\|_{\dot{H}^{1}}
    &=\|(u^{n+m+1}-u^{n+1})_x\|_{L^2}\notag\\
    &\leq C_{u_0}\int_0^t\|u^{n+m}-u^{n}\|_{\dot{H}^{1}\cap L^{\infty}}ds.
\end{align}
Combining \eqref{ineq111} and \eqref{ineq121} yields that
\begin{align}
    \|u^{n+m+1}-u^{n+1}\|_{\dot{H}^{1}\cap L^{\infty}}&\leq C_{u_0}\int_0^t\|u^{n+m}-u^{n}\|_{\dot{H}^{1}\cap L^{\infty}}ds \notag \\
    &\leq \frac{(TC_{u_0})^{n+1}}{(n+1)!} \|u^m\|_{E^2_{2,2}} \notag \\
    &\leq C_{u_0,T}2^{-n}
\end{align}
This implies $\{u^n\}_{n\in \mathbb{N}}$ is a Cauchy sequence in $C([0,T];L^{\infty}\cap \dot{H}^1)$. Since $(u^n)_{n\in \mathbb{N}}$ is bounded in $L^{\infty}([0,T];E^2_{2,2})$. Using interpolation inequalities implies that
$$u^n\rightarrow u\quad in\quad B^{\frac{3}{2}-\varepsilon}_{\infty,2},\quad \varepsilon >0$$
$$u^{n}_x\rightarrow \bar{u}_x\quad in\quad H^{1-\varepsilon},\quad \varepsilon >0$$
\quad\\

\textbf{Step four. Convergence.} \\
\par
We then prove that $u$ in $E^2_{2,2}$ and satisfies \eqref{eq1}. Applying the Fatou property we deduce that
\begin{equation}\label{fatou1}
  \left\{\begin{array}{l}
u^n\quad is\quad bounded\quad in\quad B^{\frac{3}{2}}_{\infty,2}\quad \Longrightarrow u_n \rightharpoonup u\in B^{\frac{3}{2}}_{\infty,2},\quad u_{nx} \rightharpoonup u_x \in B^{\frac{1}{2}}_{\infty,2}  \\
u^{n}_x\quad is\quad bounded\quad in\quad H^{1}\quad \Longrightarrow u_{nx} \rightharpoonup \bar{u}_x\in H^{1}\subset B^{\frac{1}{2}}_{\infty,2}.
  \end{array}\right.
\end{equation}
Because $H^{1}\subset B^{\frac{1}{2}}_{\infty,2}$, we get $\bar{u}_x=u_x$. In fact,
$$<\bar{u_x}-u_x,\varphi>=<\bar{u_x}-u^n_{x},u_x>+<u_x-u^n_{x},\varphi>\rightarrow 0,\quad \forall\varphi\in C^{\infty}_0.$$
Then we may pass to the limit in \eqref{eq2} easily and conclude that $u$ is indeed a solution of \eqref{eq1} in the sense of distributions.

Finally, as $u$ belongs to $L^{\infty}([0,T];E^2_{2,2})$, the right-hand side of \eqref{eq1} also belongs to $L^{\infty}([0,T];L^{\infty}\cap\dot{H}^1)$, which implies $u_t$ is in $C([0,T];L^{\infty}\cap\dot{H}^1)$. Similar to the proof of Lemma \ref{existence} in \cite{book}, we can easily deduce that $u$ belongs to $C([0,T];E^2_{2,2})\cap C^1([0,T];\bar{E}^2_{2,2})$.\\
\quad\\

\textbf{Step five. Uniqueness and continuous dependence.} \\
\par
We will prove the uniqueness of solutions to \eqref{eq1} next. This proof is based on the way we have in Step 3. Suppose that $(m_1, m_2)$ are two solutions of \eqref{eq1}, set $w=m_1-m_2$, we obtain
\begin{equation*}
    \partial_tw+m_1\partial_xw=-wm_{2x}+\int_{-\infty}^{x}\frac{1}{2} w_x(m_1+m_2)dz,
\end{equation*}

By virtue to Lemma \ref{priori estimate}, we have
\begin{align}\label{ineq11}
    \|w(t)\|_{L^{\infty}}\leq \|w(0)\|_{L^{\infty}}+ C_{u_0}\int_0^t\|w\|_{\dot{H}^{1}\cap L^{\infty}}ds.
\end{align}
and
\begin{align}\label{ineq12}
    \|w(t)\|_{\dot{H}^{1}}\leq \|w(0)\|_{\dot{H}^{1}}+ C_{u_0}\int_0^t\|w\|_{\dot{H}^{1}\cap L^{\infty}}ds.
\end{align}

Combining \eqref{ineq11}, \eqref{ineq12} and Gronwall's inequality yield that
\begin{align}\label{ineq18}
    \|w(t)\|_{\dot{H}^{1}\cap L^{\infty}}\leq \|w(0)\|_{\dot{H}^{1}\cap L^{\infty}}+C_{u_0}\int_0^t\|w\|_{\dot{H}^{1}\cap L^{\infty}}ds\leq C_{u_0}\|w(0)\|_{\dot{H}^{1}\cap L^{\infty}}.
\end{align}

Therefore, the uniqueness is obvious in view of \eqref{ineq18}.
Moreover, an interpolation argument ensures that the continuity with respect to the initial data holds for the norm $C([0,T];B^{\frac{3}{2}-\epsilon}_{\infty ,2}\cap \dot{H}^{2-\epsilon})$ for $\epsilon >0$ sufficient small. In fact, similar to the proof of \cite{li2}, we can raise the continuity with respect to the initial data until $C([0,T];E^2_{2,2})$.
\quad\\

\textbf{Step six. other cases.} \\

Since we have prove the local well-posedness for (\ref{eq1}) in $C([0,T];E^2_{2,2})\cap C^1([0,T];\bar{E}^2_{2,2})$, other cases are similar. In fact, we should only take some modifications in step three (step six is similar).

Consider the Cauchy sequence $w^{n+1}:=u^{n+m+1}-u^{n+1}$:
\begin{equation}\label{ineqnew18}
\frac{d}{dt}w^{n+1}+u^{n+1}w^{n+1}_x=-w^{n}u^{n+m+1}_x+\int_{\infty}^{x}w^{n+1}_x(u^{n+m}_x+u^{n}_x)dz
\end{equation}
Similar to Step 2, it's easy to deduce that $\{u^n_x\}$ is bounded in $L^q$ if $u_{x0}\in L^q$. Then by virtue to Lemma \ref{priori estimate} and the Holder inequality, we have
\begin{align}\label{ineqnew11}
\|w^{n+1}(t)\|_{L^{\infty}}
    &\leq
    C\int_{0}^{t}\|u^{n+m+1}_x\|_{L^{\infty}}\|w^{n+1}\|_{L^{\infty}}+
    \|u^{n+m}_x+u^{n}_x\|_{L^{q}} \|w^{n}\|_{\dot{W}^{1,p}}ds   \notag\\
    &\leq C_{u_0} \int_{0}^{t} \|w^{n}\|_{L^{\infty}\cap\dot{W}^{1,p}}ds
\end{align}

and
\begin{align}\label{ineqnew12}
    \|w^{n+1}(t)\|_{\dot{W}^{1,p}}
    &\leq C\int_{0}^{t}(\|u^{n+m+1}_x\|_{L^{\infty}}+\|u^{n}_x\|_{L^{\infty}})(\|w^{n}\|_{\dot{W}^{1,p}}+\|w^{n+1}\|_{\dot{W}^{1,p}})
    +(\|u^{n+m+1}_{xx}\|_{L^{p}})\|w^{n}\|_{L^{\infty}}ds  \notag\\
    &\leq C_{u_0}\int_{0}^{t} \|w^{n}\|_{L^{\infty}\cap\dot{W}^{1,p}}ds,
\end{align}
where the last inequality from the Gronwall's inequality. Then Combining (\ref{ineqnew11}) and (\ref{ineqnew12}), we get
\begin{align}
    \|w^{n+1}(t)\|_{L^{\infty}\cap\dot{W}^{1,p}} \leq C_{u_0}\int_{0}^{t} \|w^{n}\|_{L^{\infty}\cap\dot{W}^{1,p}}ds
\end{align}
After some calculations we still get $\{u^n\}_{n=1}^{\infty}$ is a Cauchy sequence in $L^{\infty}\cap\dot{W}^{1,p}$.

This complete the proof.
\end{proof}
\begin{rema}
For $1\leq p\leq 2$, we have $\dot{B}^{1}_{p,p}\cap\dot{B}^{2}_{p,p}\hookrightarrow\dot{W}^{1,q}$. But for $p> 2$, we have to consider it for an extra space $\dot{W}^{1,q}$ for the initial data.
\end{rema}

\section{Blow-up and global existence}
\par
First we prove a conservation inequality for \eqref{eq1}.
\begin{lemm}\label{conservation}
Let $u_0\in E^s_{2,2},s>\frac{5}{2}$ and $T^*$ be the maximal existence time of the corresponding solution $u$ to \eqref{eq1}, then we have
$$\|u(t)\|_{\dot{H}_1}\leq \|u_0\|_{\dot{H}_1}.$$
\end{lemm}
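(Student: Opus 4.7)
My plan is to derive an energy identity for $\|u_x\|_{L^2}^2$ by differentiating the equation in $x$, testing against $u_x$, and exploiting a cancellation between the transport term and the cubic source.

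First I would apply $\partial_x$ to equation \eqref{eq1}. Since $g(t)$ is independent of $x$ and $\partial_x\int_{-\infty}^x \tfrac{1}{2} u_x^2(z)\,dz = \tfrac{1}{2}u_x^2$, this yields
$$u_{xt} + u u_{xx} + u_x^2 = \tfrac12 u_x^2, \qquad \text{i.e.,} \qquad u_{xt} + u u_{xx} = -\tfrac12 u_x^2.$$
Next I would multiply by $u_x$ and integrate over $\mathbb{R}$ to obtain
$$\frac{1}{2}\frac{d}{dt}\int_{\mathbb{R}} u_x^2\,dx + \int_{\mathbb{R}} u u_x u_{xx}\,dx = -\frac{1}{2}\int_{\mathbb{R}} u_x^3\,dx.$$

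The key step is to rewrite the middle term using $u u_x u_{xx} = \tfrac12 u (u_x^2)_x$ and integrate by parts:
$$\int_{\mathbb{R}} u u_x u_{xx}\,dx = \tfrac12\bigl[u u_x^2\bigr]_{-\infty}^{+\infty} - \tfrac12\int_{\mathbb{R}} u_x^3\,dx.$$
The boundary term vanishes thanks to the regularity hypothesis $s>\tfrac52$: indeed, $u \in E^s_{2,2}\subset L^{\infty}$ and $u_x \in \dot B^{s-1}_{2,2}$ with $s-1>\tfrac32$, so Proposition \ref{prop0} combined with the Sobolev embedding places $u_x$ in $C_0(\mathbb{R})$, whence $u\cdot u_x^2 \to 0$ as $|x|\to\infty$. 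Plugging this back in, the two cubic integrals cancel exactly:
$$\frac{1}{2}\frac{d}{dt}\|u_x(t)\|_{L^2}^2 = -\frac{1}{2}\int u_x^3\,dx + \frac{1}{2}\int u_x^3\,dx = 0,$$
which implies $\|u_x(t)\|_{L^2}=\|u_{0x}\|_{L^2}$ on $[0,T^*)$ and yields the claimed (in fact equality) bound.

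The main technical obstacle is justifying the integration by parts rigorously on the whole line: unlike the periodic case there is no a priori decay of $u$ itself, so one must rely on the decay of $u_x$ obtained from the Sobolev embedding $\dot B^{s-1}_{2,2}\cap L^2 \hookrightarrow C_0$. If one prefers to avoid any decay issue, the identity can be established first on the smooth approximate solutions $u^n$ from the construction in Theorem \ref{theorem} (where the integration by parts is automatic), and then passed to the limit using the convergence $u^n \to u$ from Step 3–4 of the proof of Theorem \ref{theorem}, which at the limit produces at worst the stated inequality.
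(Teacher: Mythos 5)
Your proposal is correct and takes essentially the same route as the paper: the paper differentiates the equation, multiplies by $2u_x$ to obtain the divergence identity $\frac{d}{dt}u_x^2+(uu_x^2)_x=0$, and integrates over $\mathbb{R}$, which is exactly your cancellation between the transport term and the cubic source written in conservative form. Your version merely makes explicit the integration by parts and the decay of $u_x$ needed to discard the boundary term, which the paper leaves implicit.
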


\begin{proof}
Differentialing the equation \eqref{eq1} and multiplying with $2u_x$, we have
\begin{equation}\label{eq3}
    \frac{d}{dt}u^2_x+(uu^2_x)_x=0.
\end{equation}
This implies
\begin{align*}
 \frac{d}{dt}\|u(t)\|^2_{\dot{H}_1}=0.
\end{align*}
\end{proof}

Next we state a blow-up criterion for \eqref{eq1}.
\begin{lemm}\label{blow up}
Let $u_0\in E^s_{2,2}=L^{\infty}\cap \dot{H}^1\cap\dot{H}^s$ with $s>\frac{5}{2}$ being as in Theorem \ref{theorem},
and let $T^*$ be the maximal existence time of the corresponding solution $u$ to \eqref{eq1}. Then $u$ blows up in finite time $T^*$ if and only if
$$ \int_0^{T^*} \|u_x(t')\|_{L^{\infty}}dt'=\infty.$$
\end{lemm}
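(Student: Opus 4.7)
The plan is to prove the Beale-Kato-Majda style blow-up alternative in the standard way. The trivial direction is immediate: since $s>\tfrac52$, $u\in E^s_{2,2}=L^\infty\cap\dot{H}^1\cap\dot{H}^s$ gives $u_x\in L^2\cap\dot{H}^{s-1}\hookrightarrow L^\infty$ in one space dimension, so if $u$ continues as an $E^s_{2,2}$-valued continuous function past $T^*$ then $t\mapsto\|u_x(t)\|_{L^\infty}$ is continuous on a closed interval containing $[0,T^*]$ and the integral is finite. The substantive direction is the contrapositive: assuming $\int_0^{T^*}\|u_x(t')\|_{L^\infty}\,dt'<\infty$, I will produce uniform a priori bounds on $\|u(t)\|_{E^s_{2,2}}$ over $[0,T^*)$; Theorem \ref{theorem} applied with initial datum $u(t_0)$ at $t_0<T^*$ close enough to $T^*$ will then extend the solution beyond $T^*$, contradicting maximality (the local existence time depends only on the size of the initial norm).

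The three ingredients of the $E^s_{2,2}$ norm are controlled one at a time. Lemma \ref{conservation} handles $\|u(t)\|_{\dot{H}^1}\le\|u_0\|_{\dot{H}^1}$ immediately. For $\|u(t)\|_{L^\infty}$, rewrite \eqref{eq1} as $u_t+uu_x=G$ with $G(t,x)=\int_{-\infty}^x\tfrac12 u_x^2(z)\,dz$; propagating $L^\infty$ along the characteristics of the vector field $u$ yields
\[
\|u(t)\|_{L^\infty}\le\|u_0\|_{L^\infty}+\int_0^t\|G(s)\|_{L^\infty}\,ds\le\|u_0\|_{L^\infty}+\tfrac{t}{2}\|u_0\|_{\dot{H}^1}^2,
\]
where the last step uses $\|G\|_{L^\infty}\le\tfrac12\|u_x\|_{L^2}^2$ together with Lemma \ref{conservation}. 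Notice this bound does not even invoke $\|u_x\|_{L^\infty}$.

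The core estimate is the one for $\|u(t)\|_{\dot{H}^s}$. Applying the transport inequality of Lemma \ref{priori estimate} in the homogeneous Besov setting $\dot{B}^s_{2,2}=\dot{H}^s$, and noting that $s>\tfrac52>\tfrac32$ so that the prefactor $V'(t)$ is governed by $\|u_x(t)\|_{L^\infty}$ through Sobolev embedding, produces
\[
\|u(t)\|_{\dot{H}^s}\le\|u_0\|_{\dot{H}^s}+C\int_0^t\|u_x(s)\|_{L^\infty}\|u(s)\|_{\dot{H}^s}\,ds+\int_0^t\|G(s)\|_{\dot{H}^s}\,ds.
\]
Since $\partial_x$ is an isomorphism $\dot{H}^s\to\dot{H}^{s-1}$, one has $\|G\|_{\dot{H}^s}=\tfrac12\|u_x^2\|_{\dot{H}^{s-1}}$; because $s-1>\tfrac32>\tfrac12$, the Moser type product estimate (Lemma \ref{product}) in its homogeneous form gives $\|u_x^2\|_{\dot{H}^{s-1}}\le C\|u_x\|_{L^\infty}\|u\|_{\dot{H}^s}$, and Gr\"onwall's lemma then yields
\[
\|u(t)\|_{\dot{H}^s}\le\|u_0\|_{\dot{H}^s}\exp\!\Big(C\!\int_0^t\|u_x(s)\|_{L^\infty}\,ds\Big),
\]
which is finite on $[0,T^*)$ by assumption.

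The main obstacle I expect is the careful bookkeeping in the homogeneous framework. Since $G$ itself lies in no $L^p$ with $p<\infty$, classical inhomogeneous Sobolev estimates are not available; what saves the day is that only $\partial_x G=\tfrac12 u_x^2$ enters the $\dot{H}^s$ estimate through the isomorphism $\partial_x:\dot{H}^s\to\dot{H}^{s-1}$, and the homogeneous product rule applies in the regime $s-1>\tfrac12$. This is precisely the reason the authors set the problem in $E^s_{p,r}$. Once the three uniform bounds are combined, $\|u(t)\|_{E^s_{2,2}}$ stays bounded on $[0,T^*)$ and the standard continuation argument via Theorem \ref{theorem} closes the proof.
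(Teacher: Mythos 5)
Your argument is correct and follows essentially the same route as the paper's own proof: both reduce the substantive direction to the a priori bound $\|u(t)\|_{E^s_{2,2}}\le C\|u_0\|_{E^s_{2,2}}\,e^{C\int_0^t\|u_x(s)\|_{L^\infty}\,ds}$ obtained from the transport estimate of Lemma \ref{priori estimate} applied to \eqref{eq1} (or its $x$-derivative), the Moser-type product bound $\|u_x^2\|_{\dot H^{s-1}}\le C\|u_x\|_{L^\infty}\|u_x\|_{\dot H^{s-1}}$, and Gr\"onwall, followed by the standard continuation argument. The only point worth flagging (and it applies equally to the paper's inequality \eqref{ineq25}) is that the factor $\|u_x\|_{L^\infty}$ in the Gr\"onwall kernel does not come from Sobolev embedding, which points the wrong way, but from the refined commutator estimate in the self-transport case $v=u$; otherwise your treatment of the $L^\infty$ component via characteristics and Lemma \ref{conservation} is, if anything, cleaner than the paper's.
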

\begin{proof}
Taking the $L^{\infty}$ norm to (\ref{eq1}) both sides, we have
\begin{equation}\label{ineq23}
    \|u(t)\|_{L^{\infty}}\leq C(\|u_0\|_{L^{\infty}}+\int_0^t \|u_x\|_{L^{\infty}}\|u\|_{L^{\infty}}
    +\|u_{0x}\|^2_{L^2}ds,
\end{equation}
Then differentiating the equation \eqref{eq1},
\begin{equation}\label{eq24}
    \frac{d}{dt}u_x+uu_{xx}=-\frac{1}{2}u^2_x.
\end{equation}
By virtue to Lemma \ref{priori estimate} and $u_x\in H^{s-1},s>\frac{5}{2}$, we get
\begin{equation}\label{ineq25}
    \|u_x(t)\|_{H^{s-1}}\leq C(\|u_0\|_{H^{s-1}}+\int_0^t \|u_x\|_{L^{\infty}}\|u_x\|_{H^{s-1}}ds,
\end{equation}
Combining (\ref{ineq23}), (\ref{ineq25}) and the Gronwall inequality, we have
\begin{equation}\label{ineq250}
    \|u(t)\|_{E^s_{2,2}}\leq Ce^{\int_{0}^{t}\|u_x\|_{L^{\infty}}ds}[(\|u_0\|_{E^s_{2,2}}+ \|u_{0x}\|^2_{L^2}].
\end{equation}
If $T^*$ is finite, and $\int_0^{T^*} \|u_x\|_{L^{\infty}}dt'<\infty$, then $u\in L^{\infty}([0,T^*];E^s_{2,2})$, which
contradicts the assumption that $T^*$ is the maximal existence time.

On the other hand, by Theorem \ref{theorem} and the fact that $E^s_{2,2}\hookrightarrow \dot{W}^{1,\infty}$, if $\int_0^{T^*} \|u_x\|_{L^{\infty}}dt'=\infty$, then $u$ must blow up in finite time.
\end{proof}
\begin{rema}
By the algebra interpolation, we can easily get a weaker blow-up criterion for \eqref{eq1}:
$$\lim_{t\rightarrow T}\left\|u_x\right\|_{B^{0}_{\infty ,\infty}}=\infty.$$
\end{rema}

Let us consider the ordinary differential equation:
\begin{equation}\label{eq4}
  \left\{\begin{array}{l}
  q_t(t,x)=u(t,q(t,x)),\quad t\in[0,T),  \\
  q(0,x)=x,\quad x\in\mathbb{R}.
  \end{array}\right.
\end{equation}
If $u\in E^s_{2,2}$ with $s\geq 2$ being as in Theorem \ref{theorem}, then $u\in C([0,T); C^{0,1})$. By the classical results in the theory of ordinary differential equations, we can easily infer that \eqref{eq4} have a unique solution $q\in C^1([0,T)\times\mathbb{R};\mathbb{R})$ such that the map $q(t,\cdot)$ is an increasing diffeomorphism of $\mathbb{R}$ with
$$q_x(t,x)=\exp\Big(\int_0^t u(t',q(t',x)\Big)dt'>0,\quad \forall (t,x)\in[0,T)\times\mathbb{R}.$$

Now the following theorem shows that under particular condition for the initial data, the corresponding solution of \eqref{eq1} will exist globally in time.
\begin{theo}\label{global}
Let $u_0\in E^s_{2,2}$, $s>\frac{5}{2}$. Assume $u_{0xx}(x)\geq 0$ when $x\leq x_0$, $u_{0xx}(x)\leq 0$ when $x\geq x_0$ for some $x_0\in\mathbb{R}$, and $u_{0x}(x_0)>0$.
Then the corresponding solution $u$ of \eqref{eq1} exists globally in time .
\end{theo}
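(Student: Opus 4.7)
The strategy is to reduce global existence to a pointwise bound on $u_x$ via the blow-up criterion (Lemma \ref{blow up}), and to obtain that bound by propagating the pointwise sign of $u_{0x}$ along the characteristic flow \eqref{eq4} using a Riccati ODE. Concretely, I would show that under the given hypotheses $u_{0x}\ge 0$ everywhere, that nonnegativity is preserved along characteristics, and that the Riccati structure forces $u_x$ to stay uniformly bounded for all time.

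\textbf{Step 1: The Riccati ODE along characteristics.} Differentiating \eqref{eq1} in $x$ yields
\begin{equation*}
u_{xt}+uu_{xx}+u_x^2=\tfrac{1}{2}u_x^2,\qquad\text{i.e.}\qquad u_{xt}+uu_{xx}=-\tfrac{1}{2}u_x^2.
\end{equation*}
Composing with the flow $q(t,x)$ from \eqref{eq4} and using $\dot q=u(t,q)$, this gives
\begin{equation*}
\frac{d}{dt}\bigl[u_x(t,q(t,x))\bigr]=-\tfrac{1}{2}\bigl[u_x(t,q(t,x))\bigr]^2,
\end{equation*}
whose explicit solution is $u_x(t,q(t,x))=\dfrac{u_{0x}(x)}{1+\frac{1}{2}u_{0x}(x)\,t}$. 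From this, I can already read off the qualitative dichotomy: points $x$ with $u_{0x}(x)<0$ lead to blow-up of $u_x(t,q(t,x))$ at $t=-2/u_{0x}(x)$, whereas points with $u_{0x}(x)\ge 0$ propagate to values in $[0,u_{0x}(x)]$ for all $t\ge 0$.

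\textbf{Step 2: Pointwise nonnegativity of $u_{0x}$.} This is the key step using the hypotheses. Since $u_0\in E^s_{2,2}$ with $s>\frac{5}{2}$, we have $u_{0x}\in H^{s-1}$ with $s-1>\frac{3}{2}$, so by Sobolev embedding $u_{0x}\in C^0(\mathbb{R})$ with $u_{0x}(x)\to 0$ as $|x|\to\infty$. Combined with the monotonicity: on $(-\infty,x_0]$ the function $u_{0x}$ is nondecreasing (since $u_{0xx}\ge 0$) with $\lim_{x\to-\infty}u_{0x}(x)=0$, hence $u_{0x}(x)\ge 0$ on $(-\infty,x_0]$; symmetrically, on $[x_0,\infty)$ it is nonincreasing with limit $0$ at $+\infty$, hence $u_{0x}(x)\ge 0$ there as well. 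Monotonicity also gives $\|u_{0x}\|_{L^\infty}=u_{0x}(x_0)=:M<\infty$, with $M>0$ by hypothesis.

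\textbf{Step 3: Uniform bound and conclusion.} Combining Steps 1 and 2, for every $x\in\mathbb{R}$ and every $t\in[0,T^*)$ we have $0\le u_x(t,q(t,x))\le u_{0x}(x)\le M$. Since $q(t,\cdot):\mathbb{R}\to\mathbb{R}$ is a $C^1$ diffeomorphism (as recalled right before the theorem), this yields $\|u_x(t)\|_{L^\infty}\le M$ for all $t\in[0,T^*)$. Therefore $\int_0^{T^*}\|u_x(t)\|_{L^\infty}\,dt\le MT^*$, which is finite whenever $T^*<\infty$; the blow-up criterion of Lemma \ref{blow up} then forces $T^*=\infty$.

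\textbf{Main obstacle.} The only nontrivial ingredient is Step 2: extracting pointwise nonnegativity of $u_{0x}$ on all of $\mathbb{R}$ from the monotonicity assumption, which requires the mild decay $u_{0x}\to 0$ at infinity obtained from the regularity $u_0\in E^s_{2,2}$, $s>\frac{5}{2}$. Once that is in place, the Riccati-along-characteristics argument is standard for Hunter--Saxton type equations and closes the proof cleanly via Lemma \ref{blow up}.
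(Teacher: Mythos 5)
Your proposal is correct, and it closes the argument by a route that is cleaner than the paper's at the one delicate point. Both proofs rest on the same two pillars — the blow-up criterion of Lemma \ref{blow up} and the Riccati equation $\frac{d}{dt}u_x(t,q(t,x))=-\frac12 u_x^2$ along the flow \eqref{eq4} — but they extract the uniform bound on $u_x$ differently. The paper differentiates once more, propagates the sign of $u_{xx}$ in time via $\frac{d}{dt}u_{xx}(t,q)= -2u_xu_{xx}$ (which forces it to assume $s>\frac72$ and invoke a density argument), concludes that $q(t,x_0)$ remains the spatial maximum of $u_x(t,\cdot)$, and then handles the needed lower bound on $u_x$ only by the informal remark that $u_x$ "will not be less than zero, otherwise it would contradict the decay at infinity in $H^s$." You instead use the convexity/concavity hypothesis only at $t=0$: combined with $u_{0x}\in H^{s-1}\hookrightarrow C_0$ (so $u_{0x}\to 0$ at $\pm\infty$), monotonicity on each half-line gives $0\le u_{0x}\le u_{0x}(x_0)$ everywhere, and the explicit Riccati solution $u_x(t,q(t,x))=u_{0x}(x)/(1+\tfrac12 u_{0x}(x)t)$ then yields $0\le u_x(t,q(t,x))\le u_{0x}(x)$ for all $t\ge 0$ at every point. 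This buys you three things: no time-propagation of the sign of $u_{xx}$ is needed, the extra regularity $s>\frac72$ and the density step (which is itself delicate, since the sign hypotheses on $u_{0xx}$ are not obviously stable under smoothing) are avoided, and the lower bound $u_x\ge 0$ — the vaguest part of the paper's proof — becomes an immediate consequence of the explicit formula. The only thing the paper's version adds is the qualitative statement \eqref{eq6} about the persistence of the convexity profile, which is not actually needed for the conclusion.
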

\begin{proof}
Arguing by density, now we assume $s>\frac 7 2$.
Differentiating \eqref{eq24}, we have
\begin{equation}\label{eq5}
    \frac{d}{dt}u_{xx}+uu_{xxx}=-2u_xu_{xx}
\end{equation}
Then we have
$$\frac{d}{dt}u_{xx}(t,q(t,x))=-2u_xu_{xx},$$
Hence,
$$u_{xx}(t,q(t,x))=u_{0xx}\exp\Big(\int_0^t -2u_x(t',q(t',x))dt'\Big),$$
which implies that $u_{xx}$ doesn't change sign, so we can deduce that
\begin{equation}\label{eq6}
   u_{xx}(t,x)\geq 0,\quad when\quad x\leq q(t,x_0)
   \quad and \quad
   u_{xx}(t,x)\leq 0,\quad when\quad x\geq q(t,x_0).
\end{equation}
Moreover, we have
\begin{equation}\label{eq7}
    \frac{d}{dt}u_{x}(t,q(t,x))=-\frac{1}{2}u^2_x\leq 0.
\end{equation}

Using the fact that the flow $q(t,x)$ is a differmorphism and $u_{0x}(x_0)>0$, and by \eqref{eq6} we deduce that $q(t,x_0)$ is the maximum value point. \eqref{eq7} tells us $u_x(t)$ will decrease monotonly at the point $q(t,x_0)$ along the flow. Moreover, since $u_x(t)$ belongs to $H^{s-1}$, it will decrease but will not be less than zero. Otherwise, it will contradict with the decay of infinity in $H^s$.

As a result, we can deduce that the maximum point is at the initial point $x_0$ such that
$$\|u_x\|_{L^{\infty}}\leq \|u_{0x}\|_{L^{\infty}},$$
which means the solution is global by Lemma \ref{blow up} .
\end{proof}
\begin{rema}
The initial datas satisfying the conditions of Theorem \ref{global} are exist such as $u_0(x)=\int_{-\infty}^{x} e^{-z^2}dz$. It's easy to deduce that $u_0(x)\in L^{\infty}\cap \dot{H}^1\cap \dot{H}^2$, $u_{0x}(0)>0$, and $u_{0xx}(x)\geq 0$ when $x\leq 0$ and $u_{0xx}(x)\leq 0$ when $x\geq 0$.
\end{rema}

We then shows that the corresponding solution of \eqref{eq1} will blow up by giving some particular condition for the initial data.
\begin{theo}
Let $u_0\in E^s_{2,2}$, $s\geq 2$. Assume that there exists a point $x_0$ such that $u_{0x}(x_0)<0$,
Then the corresponding solution $u$ of \eqref{eq1} blows up in finite time.
\end{theo}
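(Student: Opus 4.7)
The plan is to drive $u_x$ to $-\infty$ along the characteristic issued from $x_0$ via a Riccati ODE. Differentiating the first equation of \eqref{eq1} in $x$ produces exactly the identity \eqref{eq24}, namely
$$\partial_t u_x + u\,\partial_x u_x = -\tfrac12\,u_x^2,$$
which is a transport equation for $u_x$ with a quadratic sink on the right-hand side. Using the flow $q(t,x)$ defined in \eqref{eq4}, the chain rule converts this PDE into the pointwise ODE
$$\frac{d}{dt}\bigl[u_x(t, q(t,x))\bigr] = -\tfrac12\,u_x^2(t,q(t,x))$$
along each characteristic curve.

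I would specialize to $x=x_0$ and set $f(t):=u_x(t,q(t,x_0))$, so $f(0)=u_{0x}(x_0)<0$. Then $f$ solves the Riccati equation $f'=-\tfrac12 f^2$, which integrates explicitly to $f(t)=\dfrac{2f(0)}{2+f(0)\,t}$. Because $f(0)<0$, the denominator vanishes at the finite time $T^{*}:=-2/f(0)>0$, and $f(t)\to -\infty$ as $t\uparrow T^{*}$. Consequently $\|u_x(t,\cdot)\|_{L^{\infty}}\geq |f(t)|\to\infty$, and Lemma \ref{blow up} forces the maximal existence time of the solution to be at most $T^{*}$.

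The main technical obstacle is a regularity mismatch: Lemma \ref{blow up} is stated for $s>\tfrac52$, while the theorem only assumes $s\geq 2$. For $s>\tfrac52$ the chain-rule manipulation is classical, since both $u_x$ and $u_{xx}$ are continuous and $q(t,\cdot)$ is a $C^1$ diffeomorphism. For the borderline range $s\in[2,\tfrac52]$, in one space dimension we still have $u_x\in \dot H^{s-1}\hookrightarrow C^0$ and the flow $q$ remains a $C^1$ diffeomorphism, so the ODE for $f$ holds in the a.e.\ sense along every characteristic. I would complete the argument either by a density argument, approximating $u_0$ in $E^s_{2,2}$ by smooth data $u_0^{\varepsilon}$ with $u_{0x}^{\varepsilon}(x_0)\to u_{0x}(x_0)<0$, applying the Riccati blow-up on each smooth level, and passing to the limit via the continuous dependence proven in Step 5 of Theorem \ref{theorem}; or by directly extending the blow-up criterion to $s\geq 2$ using the embedding $E^s_{2,2}\hookrightarrow W^{1,\infty}$ (which in one dimension follows from $H^1\hookrightarrow L^{\infty}$). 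Once this point is handled, the algebraic heart of the proof is just the explicit integration of the Riccati ODE, and the upper bound $T^{*}=2/|u_{0x}(x_0)|$ on the lifespan emerges as a byproduct.
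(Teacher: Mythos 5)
Your proposal matches the paper's proof essentially step for step: the paper also differentiates \eqref{eq1}, integrates the Riccati equation $\frac{d}{dt}u_x(t,q(t,x))=-\tfrac12 u_x^2$ along the characteristic through $x_0$ to get $u_x(t,q(t,x))=\frac{2}{t+2/u_{0x}}$, concludes $T<-2/u_{0x}(x_0)$, and invokes the blow-up criterion, handling the regularity gap by the same density reduction to $s>\tfrac52$. Your treatment of the borderline range $s\in[2,\tfrac52]$ is somewhat more explicit than the paper's one-line density remark, but the argument is the same.
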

\begin{proof}
Arguing by density, now we assume $s>\frac{5}{2}$.
From (\ref{eq24}), we have
\begin{equation}\label{eq24}
    \frac{d}{dt}u_x(t,q(t,x))=-\frac{1}{2}u^2_x.
\end{equation}
Solving the above equality, we finally get
\begin{equation}\label{ineq10}
    u_x(t,q(t,x))=\frac{2}{t+\frac{2}{u_{0x}}}
\end{equation}
As $u_{0x}(x_0)< 0$, we can easily deduce that the maximal time $T<-\frac{2}{u_{0x}(x_0)}$.

Therefore, from \eqref{ineq10} we know $u_x(t)\rightarrow-\infty$ as $t\rightarrow T$. By Lemma \eqref{blow up}, the solution $u$ will blow up in finite time.
\end{proof}

\section{ill-posedness}
\par
In this section, we are going to prove the norm inflation in $A\triangleq L^{\infty}\cap \dot{B}^{\frac{1}{p}}_{p,r}\cap \dot{B}^{1+\frac{1}{p}}_{p,r}$ with $1\leq p\leq \infty,r>1$
\begin{theo}
Let $1<r\leq \infty ,1\leq p\leq \infty$. For any $\epsilon>0$,there exists $u_0\in H^{\infty}$ such that\\
(1)$\left\|u_0\right\|_{A}\leq \epsilon.$\\
(2)There exists a unique solution $u\in E^{\infty}_{p,r}$ with maximal $T<\epsilon$.\\
(3)$\lim_{t\rightarrow T}\left\|u_x\right\|_{B^{\frac{1}{p}}_{p,r}}\geq \lim_{t\rightarrow T}\left\|u_x\right\|_{B^{0}{\infty ,\infty}}=\infty.$
\end{theo}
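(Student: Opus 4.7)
The plan is to construct, for each $\epsilon>0$, a single Schwartz initial datum whose critical Besov norm is $\lesssim\epsilon$ but whose derivative at some point dips below $-2/\epsilon$; the characteristic ODE established in Section~4 then forces finite-time blow-up within $[0,\epsilon)$, and the blow-up criterion together with Besov embedding delivers the stated norm inflation. The whole construction exploits the gap between the $\ell^r$ and $\ell^1$ summations when $r>1$, which is the only regime in which the theorem is claimed.

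Take a real Schwartz function $\phi$ whose Fourier transform is supported in the annulus $\mathcal{C}$ from Proposition~2.1, translated so that $\phi'(0)=-1$. For a large integer $N$ and a small $\delta>0$, set
\[
u_0(x)\;=\;\frac{\delta}{N^{1/r}}\sum_{k=N}^{2N}2^{-k}\,\phi(2^k x).
\]
Because the rescalings $\phi(2^k\cdot)$ have Fourier supports in $2^k\mathcal{C}$, only finitely many of them interact with $\dot{\Delta}_j$ at each $j$, and the dyadic blocks satisfy $\|\dot{\Delta}_j u_0\|_{L^p}\approx \delta N^{-1/r}2^{-j(1+1/p)}\|\phi\|_{L^p}$ for $j\in[N,2N]$ (and are negligible otherwise). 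A direct dyadic summation then gives
\[
\|u_0\|_{\dot{B}^{1+1/p}_{p,r}}\;\lesssim\;\delta,\qquad \|u_0\|_{\dot{B}^{1/p}_{p,r}}+\|u_0\|_{L^\infty}\;\lesssim\;\delta\,2^{-N},
\]
so that $\|u_0\|_A\leq C\delta$. On the other hand, differentiating termwise at $0$,
\[
u_{0,x}(0)\;=\;\frac{\delta}{N^{1/r}}\sum_{k=N}^{2N}\phi'(0)\;=\;-\delta\,N^{1-1/r}(1+o(1)),
\]
and because $r>1$ this tends to $-\infty$ as $N\to\infty$. Fixing $\delta=\epsilon/(2C)$ and choosing $N$ so that $\delta N^{1-1/r}>2/\epsilon$ yields $u_0\in H^\infty$ with $\|u_0\|_A\leq\epsilon$ and $u_{0,x}(0)<-2/\epsilon$.

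Since $u_0\in H^\infty$ embeds into $E^s_{p,r}$ for every $s$, Theorem~\ref{theorem} produces a unique maximal solution $u\in C([0,T);E^s_{p,r})$. Arguing exactly as in the blow-up theorem of Section~4, the characteristic $q(t,0)$ satisfies $u_x(t,q(t,0))=2/(t+2/u_{0,x}(0))$, which diverges at time $-2/u_{0,x}(0)<\epsilon$; hence $T<\epsilon$. The weaker blow-up criterion stated in the remark following Lemma~\ref{blow up} gives $\lim_{t\to T}\|u_x(t)\|_{B^0_{\infty,\infty}}=\infty$, and the embedding $B^{1/p}_{p,r}\hookrightarrow B^0_{\infty,r}\hookrightarrow B^0_{\infty,\infty}$ (Proposition~\ref{prop0}(3)) supplies the first inequality of claim~(3).

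The main technical obstacle is the lacunary Besov bookkeeping: one must be careful that the critical norm stays of order $\delta$ while the pointwise value of $u_{0,x}$ at the origin is driven at the faster rate $N^{1-1/r}$. This gap closes precisely when $r=1$, and the hypothesis $r>1$ is used here and nowhere else; once the $\ell^r$ estimates are in hand, the blow-up before time $\epsilon$ and the norm inflation follow mechanically from Theorem~\ref{theorem}, the characteristic ODE, and the blow-up criterion.
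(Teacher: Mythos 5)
Your proposal is correct and follows essentially the same route as the paper: a lacunary initial datum that is small in the critical norm $A$ but has $u_{0,x}(0)<-2/\epsilon$ (exploiting the gap between the convergent $\ell^r$ sum and the divergent $\ell^1$ sum when $r>1$), followed by the Riccati blow-up $u_x(t,q(t,0))=2/(t+2/u_{0,x}(0))$ along characteristics and the blow-up criterion to get norm inflation. The only difference is cosmetic: the paper truncates the Guo--Liu--Molinet--Yin series $h=\sum_{k\ge1}2^{-2k}k^{-2/(1+r)}h_k$ by applying $S_N$, whereas you use a finite dyadic block $\sum_{k=N}^{2N}$ with uniform weight $N^{-1/r}$; both realize the same $\ell^r$-versus-$\ell^1$ mechanism.
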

\begin{proof}
We first define
$$h(x)=\sum_{k\geq 1}^{\infty}\frac{h_k(x)}{2^{2k}k^{\frac{2}{1+r}}}$$
where $h_k(\xi)=i2^{-k}\xi\varphi(2^{-k}\xi)$, and $\varphi(\xi)$ is even non-negative, non-zero $C^{\infty}_0$ function in \cite{G-L-Y}. Similar to the proof of \cite{G-L-Y}, we can easily prove that
$$\left\|h\right\|_{A}\leq\left\|h\right\|_{B^{1+\frac{1}{p}}_{p,r}}\leq C,\quad\quad h'(0)=-\infty$$

For any $\epsilon>0$ small enough, let $u_0:=\frac{\epsilon S_Nh}{\left\|h\right\|_{B^{1+\frac{1}{p}}_{p,r}}}$, where $N$ is sufficient large such that $u'_0(0)<-2\epsilon^{-1}$. Then $\left\|u_0\right\|_{A}\leq \epsilon$ and $u_0\in E^{\infty}_{p,r}$ for fixed $N$.

Finally, by Theorem \ref{theorem} and Theorem \ref{blow up}, we get a unique local solution $u\in E^{\infty}_{p,r}$ and $u(t,x)$ blow up in finite time T, we deduce that
$$\lim_{t\rightarrow T}\left\|u_x\right\|_{B^{\frac{1}{p}}_{p,r}}\geq \lim_{t\rightarrow T}\left\|u_x\right\|_{B^{0}_{\infty ,\infty}}=\infty.$$
Then we find the example:
$$\left\|u_0-0\right\|_{A}\leq \epsilon ,\quad\quad \left\|u(t)-0\right\|_{A}\rightarrow\infty$$
This implies the ill-posedness of (\ref{eq1}).
\end{proof}
\begin{rema}
For $s=2,p=r=1$, we get the local well-posedness of (\ref{eq1}) in $E^2_{1,1}=L^{\infty}\cap \dot{B}^{1}_{1,1}\cap \dot{B}^{1}_{1,1}$, the above theorem implies the solution will be ill-posedness in $E^2_{1,r},r>1$.

For $s=2,p=r=2$, we also get the local well-posedness of (\ref{eq1}) in $E^2_{2,2}=L^{\infty}\cap \dot{H}^1\cap \dot{H}^2$. However, the above theorem only implies the solution will be ill-posedness in $L^{\infty}\cap \dot{H}^1\cap \dot{H}^{\frac{3}{2}}$, we don't whether the solution will be well-posedness or ill-posedness in $L^{\infty}\cap \dot{H}^1\cap \dot{H}^{s},\frac{3}{2}\leq s<2$.
\end{rema}

\section{Unique continuation}
\par
In this final section, we consider the unique continuation of $(\ref{eq1})$ with $g(t)=C\int_{-\infty}^{\infty}u_x^2(z)dz,\quad C\in\mathbb{R}$. We first recall the equation :
\begin{equation}
  \left\{\begin{array}{l}
    u_{x}+uu_x=\int_{-\infty}^{x}\frac{1}{2}u_x^2(z)dz+C\int_{-\infty}^{\infty}u_x^2(z)dz.  \\
    u(0,x)=u_0(x), \quad x\in\mathbb{R},
  \end{array}\right.
\end{equation}

\begin{theo}
Let $u(t,x)$ be a real strong solution of (\ref{eq1}). If there exists a open set $\Omega=(a,b)\times[t_1,t_2]$, $(a,b\in\mathbb{R},t_1,t_2\geq 0)$ such that
$$u(t,x)=0,\quad\quad (x,t)\in\Omega ,$$
and $u(t,x)$ meets one of the following cases:\\
1) For $C>0$\\
2) For $C=0$, $b=\infty$\\
3) For $-\frac{1}{2}<C<0$, $b=\infty $ or $a=-\infty$\\
4) For $C=-\frac{1}{2}$, $a=-\infty$\\
5) For $C<-\frac{1}{2}$\\
then $u\equiv 0$,\quad $(x,t)\in\mathbb{R}^+\times\mathbb{R} $.
\end{theo}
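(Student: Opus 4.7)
The plan is to exploit the vanishing of $u$ on the open rectangle $\Omega=(a,b)\times[t_1,t_2]$ to collapse the nonlocal right-hand side of the equation (now read with $g(t)=C\int_{-\infty}^{\infty}u_x^2\,dz$) to a single linear identity between two nonnegative quantities, and then dispose of all five cases by a sign inspection. Since $u\equiv 0$ on an open set, both $u_t$ and $u_x$ vanish on the interior of $\Omega$; substituting $u=u_t=u_x=0$ into the equation and using that $u_x(t,\cdot)\equiv 0$ on $(a,b)$, the pointwise identity at every $(t,x)\in(a,b)\times(t_1,t_2)$ collapses to
\begin{equation*}
\Bigl(\tfrac{1}{2}+C\Bigr)I_1(t)+C\,I_2(t)=0,\qquad t\in(t_1,t_2),
\end{equation*}
where $I_1(t):=\int_{-\infty}^{a}u_x^2(t,z)\,dz\ge 0$ and $I_2(t):=\int_{b}^{\infty}u_x^2(t,z)\,dz\ge 0$. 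Note that $I_1\equiv 0$ whenever $a=-\infty$ and $I_2\equiv 0$ whenever $b=\infty$, as the integration range collapses.

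Each of the five hypotheses then forces $I_1\equiv I_2\equiv 0$ on $(t_1,t_2)$. In case (1) both coefficients $\tfrac{1}{2}+C$ and $C$ are strictly positive, so the nonnegativity of $I_1,I_2$ gives the conclusion at once; in case (2) the endpoint $b=\infty$ yields $I_2=0$ and the identity reduces to $\tfrac{1}{2}I_1=0$; in case (3) the endpoint hypothesis kills one of $I_1,I_2$ while the remaining coefficient (nonzero under $-\tfrac{1}{2}<C<0$) forces the other to vanish; case (4) is analogous, and here $a=-\infty$ is \emph{essential} because the coefficient $\tfrac{1}{2}+C$ itself vanishes; in case (5) both coefficients are strictly negative, so the sum of nonpositive terms can vanish only if $I_1=I_2=0$. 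Consequently $u_x(t,\cdot)\equiv 0$ a.e.\ on $\mathbb{R}$ for every $t\in(t_1,t_2)$, and since $u(t,x_0)=0$ at any fixed $x_0\in(a,b)$, integration in $x$ gives $u(t,\cdot)\equiv 0$ on $\mathbb{R}$ for every $t\in(t_1,t_2)$, and by continuity in time on the closed interval $[t_1,t_2]$.

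To finish I would propagate the vanishing of $u$ to all of $\mathbb{R}^{+}\times\mathbb{R}$. Forward propagation $t\ge t_2$ is immediate from the uniqueness clause of Theorem~\ref{theorem} applied with the zero initial datum $u(t_2,\cdot)\equiv 0$ (the zero function is a global solution). For the backward direction $t\in[0,t_1]$ I would use the time-reversal symmetry of the equation: a direct computation shows that $v(t,x):=-u(t_1-t,x)$ again solves the same equation with the same nonlocal structure (the transformation $u\mapsto -u$ leaves $uu_x$ and $u_x^2$ invariant, while the change $\partial_t\mapsto -\partial_t$ is absorbed by the sign flip of $u$), so $v(0,\cdot)=-u(t_1,\cdot)=0$ and uniqueness force $v\equiv 0$ on $[0,t_1]$, i.e.\ $u\equiv 0$ there.

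The only genuinely delicate point is the borderline case $C=-\tfrac{1}{2}$, where the coefficient of $I_1$ degenerates to zero and the algebraic identity carries no information about $I_1$; this is precisely why the endpoint hypothesis $a=-\infty$ is forced there, and the parallel degeneracy at $C=0$ is the reason case (2) needs $b=\infty$. Once the two-term identity is in place, the remainder of the argument is a routine sign-tracking exercise together with one invocation of forward/backward well-posedness for \eqref{eq1}.
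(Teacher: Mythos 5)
Your proof is correct and follows essentially the same strategy as the paper: use the vanishing of $u$ on $\Omega$ to force the nonlocal terms to vanish, deduce $u_x\equiv 0$ on $\mathbb{R}$ for $t\in[t_1,t_2]$, and propagate by uniqueness. In fact it is more complete than the paper's version, which only treats case 1 in detail and declares the rest ``similar''; your decomposition into $I_1$ and $I_2$ and the resulting identity $(\tfrac12+C)I_1+CI_2=0$ is exactly the bookkeeping needed to see why the endpoint hypotheses $a=-\infty$ or $b=\infty$ are required in the degenerate cases, and your explicit time-reversal argument covers the backward propagation that the paper leaves implicit.
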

\begin{proof}
1) For $C>0$, (\ref{eq1}) can be wrote as
$$u_{x}+uu_x=\int_{-\infty}^{x}\frac{1}{2}u_x^2(z)dz+C\int_{-\infty}^{\infty}u_x^2(z)dz.$$
From the hypothesis it follows that
$$u_{x}+uu_x=0,\quad\quad(x,t)\in\Omega .$$
This implies that
$$\int_{-\infty}^{x}\frac{1}{2}u_x^2(z)dz+C\int_{-\infty}^{\infty}\frac{1}{2}u_x^2(z)dz=0,\quad(x,t)\in\Omega .$$
As $\Omega=(a,b)\times[t_1,t_2]$, we can easily get that $u_x=0,\quad (x,t)\in\mathbb{R}\times[t_1,t_2]$. The continuity of the strong solution implies $u=0,\quad (x,t)\in\mathbb{R}\times[t_1,t_2]$. Then we get $u\equiv 0,\quad (x,t)\in\mathbb{R}\times\mathbb{R}^+$ by the uniqueness.\\

Other cases are similar to 1), we omit it here.
\end{proof}

\noindent\textbf{Acknowledgements.}
This work was partially supported by NNSFC (No. 11671407), FDCT (No. 098/2013/A3), Guangdong Special Support Program (No. 8-2015), and the key project of NSF of Guangdong province (No. 2016A03031104).

\addcontentsline{toc}{section}{\refname}

\end{document}